\documentclass[12pt,leqno,a4paper]{article}
\usepackage{color}
\usepackage{amsfonts}
\usepackage{amsmath, amsfonts, amsthm, amssymb, amscd}
\usepackage[mathcal]{euscript}
\usepackage{mathrsfs}
\usepackage{dsfont}
\usepackage{ulem}
\usepackage{bbm}
\usepackage{graphicx}

\usepackage{mathtools}
\usepackage{slashed}
\usepackage{lineno} 
\usepackage{verbatim}
%\def\linenumberfont{\normalfont\small\sffamily\color{red}}
%%\modulolinenumbers[5]
%\pagewiselinenumbers

\usepackage{appendix}

\usepackage{tikz-cd}

\newtheorem{theorem}{Theorem}[section]
\newtheorem{proposition}[theorem]{Proposition}
\newtheorem{lemma}[theorem]{Lemma}

\newtheorem{definition}[theorem]{Definition}
\newtheorem{remark}[theorem]{Remark}

\numberwithin{equation}{section}

%\usepackage{cleveref}
%---------------------------
\usepackage{multirow}
\usepackage{tabularx}
%---------------------------
\usepackage{lscape}
%-------------------------------
%\usepackage[notref,notcite]{showkeys}
\newcommand \Rbb {\mathbb{R}}

\newcommand \Xcal{\mathcal{X}}

\newcommand \del \partial

\newcommand {\vep}{\varepsilon}

%-------------------------notation in ext---------------------------

\makeatletter
\def\hlinew#1{%
  \noalign{\ifnum0=`}\fi\hrule \@height #1 \futurelet
   \reserved@a\@xhline}
\makeatother

\title{Standing Sphere Blow-up Solutions to The Nonlinear Heat Equation
%Singular Standing Solutions of the nonlinear heat equation
}

\begin{document}
\maketitle
\centerline{Senhao Duan$^{(1)}$, Nejla Nouaili$^{(1)}$ and Hatem Zaag$^{(2)}$} 
\medskip
{\footnotesize
	\centerline{ $^{(1)}$ CEREMADE, Universit\'e Paris Dauphine, Paris Sciences et Lettres, France}
 \centerline{ $^{(2)}$Universit\'e Sorbonne Paris Nord,
			LAGA, CNRS (UMR 7539), F-93430, Villetaneuse, France.}
	}

     \begin{abstract}
    In this paper, we construct a singular standing-sphere solution to the nonlinear heat equation in the radial case. We give rigorous proof for the existence of such a blow-up solution in finite time. This result was predicted numerically by Baruch, Fibich, and Gavish \cite{BFGpd10}. We also prove the stability of these dynamics among radially symmetric solutions.
\end{abstract}

%%%%%%%%%%%%%%%%%%%%%%%%%%%%%%%%%%%%%%%%%%%%%%%%%%%%%%
%                   6. BODY
%%%%%%%%%%%%%%%%%%%%%%%%%%%%%%%%%%%%%%%%%%%%%%%%%%%%%%

% Only the first word and proper nouns of section titles should be capitalized.
% The title of section 1:

\section{Introduction}
In this paper, we consider the following nonlinear heat equation
\begin{equation}\label{nlh-introduction}
	\left \{
	\begin{array}{lll}
		\partial_t u&=&\Delta u+|u|^{p-1}u,\\
		u(.,0)&=&u_0 \in L^{\infty}(\Rbb^d),
	\end{array}
	\right.
\end{equation}
where $u(t):x\in \Rbb^d \to u(x,t)\in \mathbb{R}$ and $p>1$.
Equation \eqref{nlh-introduction}
is considered as a model for many physical situations such as heat transfer, combustion theory and thermal explosion. (see more in Kapila \cite{KSJAM80}, Kassoy and Poland \cite{KPsiam80} and Bebernes and Eberly \cite{BEbook89}).

The local Cauchy problem for equation \eqref{nlh-introduction} can be solved within ${L^\infty(\Rbb^d)}$. Furthermore, it can be shown that the solution $u(t)$ exists either in the interval $[0, +\infty)$ or within $[0, T)$ where $T < +\infty$.  In the latter case, $u$ undergoes a finite-time blow-up. $T$ is then called blow-up time, indicating that 
$$\lim_{t\to T}||u(t)||_{L^\infty}=+\infty.$$ 
Moreover, a point $x_0$ is called a blow-up point if there are  sequences $\{x_n\}\to x_0$ and $\{t_n\}\to T$, such that $\limsup_{n\to\infty}|u(t_n,x_n)|=+\infty$.

Despite the extensive research conducted on these equations over the past four decades, it is crucial to acknowledge that no single review can comprehensively cover all aspects. In this context, our attention is directed toward the development of solutions displaying a distinct blow-up behavior. Consequently, our references will be limited to prior work within this scope.
Interested readers may refer to \cite{QSbook07} for comprehensive  research on equation \eqref{nlh-introduction}.
The pioneering work by Giga and Kohn \cite{GKcpam85} and \cite{GMSmmas04} yielded the first insight into the asymptotics of blowup. 
\begin{comment}
	By introducing the following self-similar variables:
	\begin{equation}
		\begin{aligned}
			y&=\frac{x-a}{\sqrt{T-t}},\\
			s&=-\log(T-t),\\
			W(y,s)&=(T-t)^{\frac{1}{p-1}}U
			(x,t)\\
		\end{aligned}
	\end{equation}
\end{comment}
They established that, up to changing $u$ by $-u$, for each $K>0$, the following holds: 

\begin{equation}\label{eq:GK decay of w-k}
	\lim_{{t \to T}} \sup_{{|x| \leq K\sqrt{T-t}}} \left| (T - t)^{\frac{1}{p-1}} u(x, t) - \kappa \right| = 0, \mbox{ with $\kappa= (p-1)^{-\frac{1}{p-1}} $.}
\end{equation}
Based on a numerical analysis conducted by Berger and Kohn \cite{BKcpam88}, it was hypothesized that if the decay pattern is non-exponential, the solution $u$ to equation \eqref{nlh-introduction} would converge toward a specific universal profile, denoted as $f(z)$. 
Extensive literature is devoted to the blow-up profile for the NLH equation; see V\'elazquez \cite{VELcpde92}, \cite{VELtams93}, \cite{VELiumj93}, and Zaag \cite{ZAAaihp02}, \cite{ZAAcmp02} for partial results.
In one-space dimension, given $a$ a blow-up point, these are the situations:
\begin{enumerate}
	\item  $(T-t)^{\frac{1}{p-1}}u(x,t) \equiv \kappa$.
	\item either 
	\begin{equation}\label{eq: profile generic}
		\sup_{|x-a|\leq K\sqrt{(T-t)\log(T-t)}}\left| (T-t)^{\frac{1}{p-1}}u(t,x)-f\left(\frac{x-a}{\sqrt{(T-t)|\log(T-t)|}}\right)\right|\to 0,
	\end{equation}
	\item or for some $m\in \mathbb{N}$, $m\geq2$, and $C_m>0$
	\begin{equation}\label{profile-flatt}
		\sup_{|x-a|\leq K(T-t)^{1/2m}}\left|(T-t)^{\frac{1}{p-1}}u(x,t)-f_m\left( \frac{C_m(x-a)}{(T-t)^{1/2m}}\right)\right|\to 0,
	\end{equation}
	as $t\to T$, for any $K>0$, where 
	\begin{equation}\label{profile-generic-flat}
		\begin{aligned}
			f(z)&=(p-1+b_0z^2)^{-\frac{1}{p-1}},\mbox{ with } b_0=\frac{(p-1)^2}{4p}\\
			f_m(z)&= (p-1+b|z|^{2m})^{-\frac{1}{p-1}}, \mbox{ with } b >0. \\
		\end{aligned}
	\end{equation}
\end{enumerate}
%% \textcolor{red}{Here, you choose to give the blow-up behavior in one space dimension, then, somehow you jump to radial equations, without mentioning anything about higher dimensions, outside the radial setting (there are mainly the works by Vel\'azquez. In my opinion, we should not mention all the details as in one space dimension. Simply say that some results are available in higher dimensions (see Vel\'azquez), however, they are not as complete. For that reason, we would like to focus in  this paper on radial solutions, and then, you mention Fibich et al.}\\
In the higher dimensional case, we would like to mention the works of Herrero and V\'elazquez \cite{HVcpde92} on the asymptotic behavior of the blow-up solution to equation \eqref{nlh-introduction} and  \cite{VELtams93} on the classification of such behavior.
%Following these behaviors, there are also interesting examples.
 One may refer to Nguyen and Zaag \cite{NZ2017} for constructed solutions showing a refinement of behavior \eqref{eq: profile generic} in 2 dimensions. In the supercritical case, we have an example of a single-point blowup with a degenerate profile given by Merle, Raph\"ael, and Szeftel \cite{MRSIMRNI20}. More recently, Merle and Zaag \cite{MZIMRN22} provided an example of a degenerate blow-up solution with a completely new blow-up profile, which is cross-shaped.

% obeying with a degenerate profile and an isolated blow-up point. }
%But up to our knowledge, no one has given a blow-up solution obeying \eqref{eq: profile generic} in radial symmetrical case.} For this reason, we would like to provide such a example.

We review here the question of the existence of blow-up solutions obeying \eqref{eq: profile generic} and \eqref{profile-flatt}. Let us first mention that in the one dimensional case, the question was positively
answered by Bricmont and Kupiainen in  \cite{BKcpam88}  (see also Herrero and Vel\'azquez \cite{HVdie92} for the
case \eqref{profile-flatt} with $d = 4$). Recently, Duong et al. \cite{DNZArxiv2022} revisited the construction of \textit{flat} profile given by $f_m$ in \eqref{profile-generic-flat} using modulation theory.
The methods used in \cite{BKcpam88} were enhanced afterward by Merle and Zaag \cite{MZdm97} using a more geometrical approach. Generally speaking, regarding the linearized equation. The proof relies on the understanding of the dynamics of the self-similar version of \eqref{nlh-introduction}  around the profile \eqref{eq: profile generic}. More precisely, they proceed in two steps:

\begin{enumerate}
\item Reduce the problem into a finite-dimensional one.
\item Solve the finite-dimensional problem with a topological shooting argument. 
\end{enumerate}

This powerful method is then applied to many other different fields. Interested readers are invited to see \cite{MZjfa08}
and \cite{DNZMAMS20} for an application in the complex Ginzburg-Landau equation and a more direct way to accomplish the first step in \cite{MZdm97}. Besides, D\'avila, Del Pino, and Wei \cite{DDWIM20} applied this method to deal with the formation of the singularities for harmonic map flow. Among the numerous results, the blow-up behavior of the solution to the nonlinear Schrodinger equation remarked by Raphael \cite{Rdm06} on a sphere spikes our interest. They have studied the existence and stability of a solution blowing up on a sphere to the $L^2$-supercritical nonlinear Schr\"odinger equation by Rapha\"el \cite{Rdm06}, and this result was extended to a higher dimension in the work of Rapha\"el and Szeftel \cite{RSCMP09}. 

Indeed, the linearized equation of \eqref{nlh-introduction} in radial coordinates (see below \eqref{eq: U })  presents an additional singularity at zero. So, we use ideas from \cite{Rdm06}, \cite{RSCMP09} and  \cite{MNZNon2016}.  More precisely, we divide our study on two differents regions, the blow-up region and the regular one; see Section  \ref{Section-formulation-of-the-problem} below for more details.

Let us note that numerical evidence provided by Baruch, Fibich, and Gavish indicates that the nonlinear heat equation \eqref{nlh-introduction} allows for singular standing sphere solutions in cases of radially symmetric solutions \cite{BFGpd10}. We therefore aim to study  \eqref{nlh-introduction} in radially symmetric settings for the existence of singular standing sphere solutions and analyze their behavior, with a similar idea to \cite{MZdm97}. 

In this paper, we prove the existence and stability of radially symmetric blowups along the unit sphere of $\Rbb^d$ for the nonlinear heat equation \eqref{nlh-introduction}. Hence, we propose our main theorem as follows.
\begin{comment}
\begin{theorem}
\label{th: Main theorem}
(Existence of a singular standing solution for \eqref{nlh-introduction} with Prescribed Profile).
There exists \(T > 0\) such that Eq. \eqref{nlh-introduction} has a solution \(u(r, t)\) in \(S \times [0, T)\) such that:
\begin{enumerate}
	\item the solution \(u\) blows up in finite time \(T\)  on the sphere of raius \(r=r_{max}\);
	\item there holds that for all \(R > 0\),
	\[
	\sup_{\Lambda_R} \left|(T - t)^{\frac{1}{p-1}} u(r, t) - f\right| \to 0 \text{ as } t \to T, \quad (7)
	\]
	where \(\Lambda_R := \left\{ |r-r_{max}| \leq R \sqrt{(T - t)|\log(T - t)|}\right\}\) and where the function \(f=(p-1+b\frac{y^2}{s})\) and \(b=\frac{(p-1)^2}{4p}\) 
\end{enumerate}
\end{theorem}
\end{comment}
\begin{theorem}
\label{th: Main theorem}
(Existence of a singular standing sphere solution for equation \eqref{nlh-introduction} with prescribed profile).
There exists \(T > 0\) such that equation \eqref{nlh-introduction} has a solution \(u(x, t)\) in \(\Rbb^d \times [0, T)\), with radial symmetry such that:
\begin{enumerate}
\item The solution \(u\) blows up in finite time \(T\) on the sphere of radius \(r_{max}\);
\item There holds that for all \(R > 0\),
\[
\sup_{\Lambda_R} \left|(T - t)^{\frac{1}{p-1}} u(|x|, t) - f\left (\frac{|x|-r_{max}}{\sqrt{(T-t)|\log(T-t)|}}\right )\right| \to 0 \text{ as } t \to T, 
\]
where \(\Lambda_R := \left\{ ||x|-r_{max}| \leq R \sqrt{(T - t)|\log(T - t)|}\right\},\)
\[f(z)=\left (p-1+bz^2\right )^{-\frac{1}{p-1}}\mbox{and }b=\frac{(p-1)^2}{4p}.\]
\item for all $r>0$, $r\neq r_{max}$, $u(r,t)\rightarrow u(r,T)$  as $t\rightarrow T$, with $u(r,T)\sim u^*(r-r_{max})$ as $r\rightarrow r_{max}$, where
\begin{equation}\label{eq: profile}
	u^*(r)\sim \left[\frac b2\frac{r^2}{|\log r|}\right]^{-\frac{1}{p-1}}\mbox{ as }r\to 0.
	%\left[\frac{(p-1)^2r^2}{8p|\log r|}\right]^{-\frac{1}{p-1}}
\end{equation}
\end{enumerate}
\end{theorem}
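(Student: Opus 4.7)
The plan is to follow the Merle--Zaag framework from \cite{MZdm97}, adapted to the sphere-blowup setting as in Rapha\"el \cite{Rdm06} and Merle--Nouaili--Zaag \cite{MNZNon2016}, with the key novelty of handling the transport term induced by the radial Laplacian off-center.

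\textbf{Step 1: Radial reduction and self-similar variables near the sphere.} First I would write the equation in radial coordinates: $\partial_t u = \partial_r^2 u + \frac{d-1}{r}\partial_r u + |u|^{p-1}u$, and introduce self-similar variables centered at $r_{max}$:
\begin{equation*}
y = \frac{r - r_{max}}{\sqrt{T-t}}, \quad s = -\log(T-t), \quad w(y,s) = (T-t)^{\frac{1}{p-1}} u(r,t).
\end{equation*}
The transformed equation becomes
\begin{equation*}
\partial_s w = \partial_y^2 w - \frac{y}{2}\partial_y w - \frac{w}{p-1} + |w|^{p-1}w + \frac{(d-1)e^{-s/2}}{r_{max}+ye^{-s/2}}\partial_y w,
\end{equation*}
where the last term is a small perturbation as $s\to\infty$ provided $|y| \ll e^{s/2}$. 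I would then linearize around the expected profile $\varphi(y,s) = f(y/\sqrt{s}) + \frac{\kappa}{2ps}$, setting $w = \varphi + q$.

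\textbf{Step 2: Spectral analysis and the shrinking set.} The linearized operator $\Lcal = \partial_y^2 - \frac{y}{2}\partial_y + 1$ in the weighted space $L^2_\rho$ (with Gaussian weight $\rho(y) = e^{-y^2/4}/\sqrt{4\pi}$) has eigenvalues $1-m/2$ for $m \in \mathbb{N}$, hence two positive eigenvalues ($m=0,1$), a zero eigenvalue ($m=2$) which is neutral but becomes stable at the nonlinear level (producing a logarithmic correction), and the rest stable. Following \cite{MZdm97}, I would decompose $q = q_0 + q_1 + q_2 + q_- + q_e$ and define a shrinking set $V_A(s)$ encoding the decay rates $|q_0|, |q_1| \leq A/s^2$, $|q_2| \leq A^2 (\log s)/s^2$, and appropriate pointwise control in the outer region $|y|\geq Ks^{1/2}$.

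\textbf{Step 3: Reduction to a finite-dimensional problem and topological shooting.} I would parametrize initial data by two parameters $(d_0,d_1)\in\RR^2$ corresponding to the unstable modes, taking
\begin{equation*}
u_0(r) = \frac{1}{T^{\frac{1}{p-1}}}\left[\varphi\!\left(\tfrac{r-r_{max}}{\sqrt{T}},-\log T\right) + d_0 + d_1 \tfrac{r-r_{max}}{\sqrt{T}}\right]\chi(r),
\end{equation*}
with a suitable cutoff $\chi$. Arguing by contradiction, if $q(s)$ exits $V_A(s)$ at some $s^*$, I would show it must exit through the $(q_0,q_1)$ components, and that the map $(d_0,d_1)\mapsto (q_0(s^*),q_1(s^*))/A s^{-2}$ extends to a continuous map whose restriction to the boundary of a small square is of degree one, contradicting Brouwer.

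\textbf{Step 4: The outer region and no extra blow-up.} The main obstacle, in my view, is controlling the solution globally on $\RR^d$, not just near the sphere. Unlike the single-point blow-up case, here the blow-up locus is a codimension-one sphere, and the small transport term $\frac{(d-1)e^{-s/2}}{r_{max}+ye^{-s/2}}\partial_y w$ introduces an asymmetry between the interior ($r<r_{max}$) and exterior ($r>r_{max}$) parts of the profile. I would split $\RR^d$ into the blow-up zone $|r-r_{max}| \leq K\sqrt{(T-t)|\log(T-t)|}$ and the regular zone $|r-r_{max}| \geq \epsilon$, bridged by an intermediate region, and use standard parabolic regularity together with uniform a priori bounds $\|u(t)\|_{L^\infty(|r-r_{max}|\geq \epsilon)} \leq C(\epsilon)$ to rule out auxiliary blow-up points (in particular at $r=0$, where the coordinate singularity requires separate care via the smoothness of radial functions). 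The existence of the final profile $u^*(r-r_{max})$ and its asymptotics \eqref{eq: profile} then follow by integrating the self-similar dynamics along characteristics $y(s) = (r-r_{max})e^{s/2}$ and passing to the limit $s\to\infty$ with $r$ fixed, as in \cite{MZdm97}. Stability among radial data follows from the fact that the shooting procedure yields a codimension-two manifold of initial data, the two lost dimensions corresponding precisely to modulation in $T$ and $r_{max}$.
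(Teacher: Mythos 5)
Your overall strategy matches the paper's (Merle--Zaag projection and shooting, region splitting), but two substantive points are missing and each would block the proof as you have written it.

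First, in Step 1 you write the self-similar equation for the full $w$ with the transport term $\frac{(d-1)e^{-s/2}}{r_{max}+y e^{-s/2}}\partial_y w$ left in place. That coefficient is singular at $y=-r_{max}e^{s/2}$ (i.e.\ $r=0$), and your Step 2 proposes to project this equation on the Hermite modes against the Gaussian weight. You cannot do that directly: the integrand is not controlled near the coordinate singularity, and it is precisely there that the radial Laplacian differs from the one-dimensional one. The paper resolves this by working with $w=W\,\chi\bigl(\tfrac{y e^{-s/2}+1}{\vep_0}\bigr)$, i.e.\ by cutting $W$ off away from the sphere before linearizing, so that the coordinate singularity is removed from the domain and reappears only as compactly supported source terms $H$, $\partial_y G$, $N$ supported where $|y|\sim e^{s/2}$. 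These sources have exponentially small Hermite projections, which is what makes the mode-by-mode a priori estimates go through. Without this cutoff, or some explicit argument that the Gaussian decay dominates the singular coefficient, the projection step in your Step 2 is not justified.

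Second, you treat the bound $\|u(t)\|_{L^\infty(|r-r_{max}|\ge\varepsilon)}\le C(\varepsilon)$ as an a priori input in Step 4, but this bound is not free --- it is part of what is being proved and must be built into the bootstrap. In the paper the shrinking set $\mathcal S(t)$ has two coupled pieces: the self-similar estimates on $q$ near the sphere, \emph{and} the regular-region bound $|u(x,t)|\le\eta_0$ for $|x|\le 3\vep_0/4$. The coupling is two-directional: the regular-region bound is needed to control $\|W\|_{L^\infty}$ globally, which in turn feeds into the estimates of the cutoff source terms $H$, $G$, $N$; conversely, to close the regular-region bound you first need a ``flatness'' estimate in the intermediate region $\vep_0/8\le|r-r_{max}|\le K\sqrt{(T-t)|\log(T-t)|}$, which depends on the blow-up-region profile, and then parabolic regularity inward from there. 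Simply invoking ``standard parabolic regularity'' does not close this loop; you need the shrinking set to contain the regular-region bound so that Giga--Kohn's no-blow-up criterion and the intermediate-region flatness lemma can be applied iteratively.
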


Using  ideas from \cite{MZdm97} and \cite{ZAAihn98}, we are able to interpret the two-dimensional variable in terms of blow-up point and blow-up time. This leads to the stability of the profile \eqref{eq: profile} in Theorem \ref{th: Main theorem}.
\begin{proposition}[Stability of the singular standing sphere solution] \label{Prop: stability}

Denote by $\hat{u}$ the solution constructed in Theorem \ref{th: Main theorem} that blows up at the sphere of radius $\hat{r}$ and note by $T_{\hat{u}}$ its blow-up time. Then, there exists $\varepsilon_0 > 0$ such that for any radially symmetric initial data $u_0 \in H$, satisfying $\|u_0 - \hat{u}(\cdot, 0)\|_{L^\infty} \leq \varepsilon_0$, the solution of \eqref{nlh-introduction}, with initial data $u_0$, blows up at finite time $T_{u_0}$ at only one collapsing ring with radius  $r_0$  in $\Rbb^n$ . Moreover, the function $u(|x|, \cdot)$ satisfies the same estimates as $u$ with $T_{\hat{u}}$ replaced by $T_{u_0}$. Furthermore, it follows that
\[ T_{u_0}\to T_{\hat{u}}, \quad r_{0} \to \hat{r} \quad \text{as} \quad u_0 \to \hat{u}(0). \]

\end{proposition}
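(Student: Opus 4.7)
The plan is to follow the philosophy of Merle--Zaag \cite{MZdm97} and Zaag \cite{ZAAihn98}, transferring it to the standing-sphere setting. The construction of Theorem \ref{th: Main theorem} reduces the problem, in the self-similar variables $y=(|x|-r_{\max})/\sqrt{T-t}$, $s=-\log(T-t)$, $w(y,s)=(T-t)^{1/(p-1)}u(|x|,t)$, to controlling two nonnegative eigenmodes of the linearized operator around $f$: the eigenvalue $1$ mode (constants) and the eigenvalue $\frac12$ mode (proportional to $y$). The construction uses a two-parameter family of initial data, together with a topological shooting argument, to kill exactly these two modes. The central observation for stability is that these two parameters can be canonically reinterpreted: the constant mode corresponds to an infinitesimal shift of the blow-up time $T$, and the linear mode $y$ corresponds to an infinitesimal shift of the blow-up radius $r_{\max}$.

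With this interpretation at hand, given a radial perturbation $u_0$ with $\|u_0-\hat u(\cdot,0)\|_{L^\infty}\le \varepsilon_0$, I would introduce for each candidate pair $(T,r_{\max})$ in a small square $Q_{\varepsilon_0}$ centered at $(T_{\hat u},\hat r)$ the corresponding similarity variables and consider the resulting function $w_{T,r_{\max}}(y,s)$. Using the a priori estimates of the construction, verify that $w_{T,r_{\max}}$ lives in a slightly enlarged shrinking set $V_A(s)$ of the construction up to some (large but finite) time $s_1$, where the two nonnegative modes $q_0(s_1)$ and $q_1(s_1)$ are well defined. The map $\Phi:(T,r_{\max})\mapsto (q_0(s_1),q_1(s_1))$ is continuous, and the transversality property established in the construction (the fact that on the boundary of the parameter square the two unstable modes generically escape the shrinking set) forces the topological degree of $\Phi$ on $\partial Q_{\varepsilon_0}$ to be nonzero. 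A Brouwer-type argument then yields $(T_{u_0},r_0)\in Q_{\varepsilon_0}$ with $\Phi(T_{u_0},r_0)=0$; by the trapping mechanism of the construction, the corresponding solution stays in $V_A(s)$ for all $s\ge s_1$, hence blows up precisely on the sphere of radius $r_0$ at time $T_{u_0}$, satisfying the same asymptotic estimates as $\hat u$.

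The continuous dependence $T_{u_0}\to T_{\hat u}$, $r_0\to \hat r$ as $u_0\to \hat u(\cdot,0)$ is then forced by the fact that the zero of $\Phi$ associated to $\hat u$ itself sits inside $Q_{\varepsilon_0}$, together with uniqueness of the zero (up to shrinking $\varepsilon_0$), which comes from the nondegeneracy of $D\Phi$ at the unperturbed point: on the neutral/stable subspace the flow is exponentially contracting, so the only way to stay in the shrinking set is to cancel the two unstable modes, and the Jacobian of this cancellation with respect to $(T,r_{\max})$ is essentially the identity thanks to the identification performed in the first paragraph.

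The main obstacle will be the first step: proving that the enlarged shrinking set $V_{A}(s)$ is still trapping when the initial data of the construction is perturbed by $v_0$ with $\|v_0\|_{L^\infty}\le \varepsilon_0$. This requires redoing the pointwise and modal a priori estimates of the construction while tracking an $O(\varepsilon_0)$ extra source term, and in particular checking that the radial correction $\tfrac{d-1}{r}\partial_r u$—which is a smooth bounded coefficient near $r=\hat r>0$ and therefore enters only as a lower-order perturbation of the self-similar dynamics—is compatible with the contraction estimates on the stable part of the spectrum. Once this robust version of the trapping lemma is established, the Brouwer argument and the continuity conclusion are standard and follow the same lines as in \cite{MZdm97,ZAAihn98}.
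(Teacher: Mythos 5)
Your proposal follows exactly the strategy the paper relies on: the paper does not write out a proof of Proposition~\ref{Prop: stability} but appeals to the geometric reinterpretation of the two free parameters $(d_0,d_1)$ as infinitesimal shifts of blow-up time and blow-up radius, together with the topological shooting and trapping machinery of Merle--Zaag \cite{MZdm97} and Zaag \cite{ZAAihn98}. Your sketch (parametrize by $(T,r_{\max})$, verify trapping in the shrinking set up to a finite $s_1$, run a Brouwer degree argument on the boundary of a small parameter square, then use the transverse-crossing property and the smoothness of $\tfrac{d-1}{r}\partial_r$ near $r=\hat r>0$ to conclude) is precisely the argument those references supply and that the paper cites in lieu of a proof.
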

%	\begin{proof}
%		\end{proof}
%\textcolor{red}{Here, it would be nice to say in a remark that the solution is unstable under non-radial perturbations, thanks to the (unpublished) result about genericity by Herrero and Vel\'azquez....}

\begin{remark}
To prove Theorem \ref{th: Main theorem}, we project the linearized partial differential equation on the eigenfunctions $h_m$ given by \eqref{eq:definition of hm}.  This is technically different from the work of \cite{MNZNon2016}, \cite{MZdm97}, and \cite{BKLcpam94}, where the authors use the integral equation. We will follow the two steps proposed in \cite{MZdm97} but in a more straight way.  Indeed, we have an additional problem coming from the fact that the equation in radial coordinates presents a singularity in zero. To solve this problem, we will use ideas from \cite{MNZNon2016} and \cite{Rdm06}. 
\end{remark}
\begin{remark}
In this paper, we are focused on the radial dynamics of the circle that reduces to the one-dimensional dynamic. We will give the proof in dimension 2, but it can be extended to a higher dimension with no difficulties.
\end{remark}
\begin{remark}Note that Herrero and Vel\'azquez showed the genericity of the behavior given by \eqref{eq: profile generic} in \cite{HVCRAS92} and \cite{HVasnsp92} dedicated to the one dimensional case, and in a non-published document in higher space dimensions. In Proposition \ref{Prop: stability}, we focused on the radially symmetric perturbations. However, under non-radial perturbations, due to the genericity of the profile,  the stability of the blow-up profile breaks down.
%\eqref{eq: profile generic} shown in \cite{HVunpublished-92}}
\end{remark}
This paper is organized as follows.
In Section \ref{Section-formulation-of-the-problem}, we will give the formulation of our problem. Then, in Section \ref{Section-proof-without-technical-details}, we give the proof of Theorem \ref{th: Main theorem} without technical details and solve the finite dimension problem. Finally, in Section \ref{Section-reduction-to-finite-dimension}, we conclude by giving the proofs of propositions cited in Section  \ref{Section-proof-without-technical-details}.

\section{Formulation of the problem}\label{Section-formulation-of-the-problem}
For simplicity, we give the proof in dimension $d=2$. Inspired by the numerical results \cite{BFGpd10}, we consider the radially symmetric solution $u(r,t)=u(|x|,t)$, then we write equation \eqref{nlh-introduction} in radial coordinates as follows

\begin{equation}\label{equation-NLH-1d}
\del_t u=\del^2_{r} u+\frac{d-1}{r}\del_r u+|u|^{p-1}u.
\end{equation}

In general, the two terms $\partial_{r}^2u$ and $\frac{\partial_r u}{r}$ forming the Laplacian certainly scale the same way. Heuristically, if we assume that the singularity formation of a priory takes place exclusively around the circle $r\sim 1$, then  on this circle, the term $ \frac {\partial _r u}{r}$ scales below $\partial _r^2 u$, and thus the singular part of the equation is governed by the one-dimension nonlinear heat equation
\begin{equation}
\partial_t u= \partial_{r}^2u+|u|^{p-1}u,\;
\end{equation}
for which the rigorous construction of blow-up solution is very well -known (see \cite{MZdm97}),  while the existence of the term $\frac{d-1}{r}\del_r u$, which has a singularity at $\{(t,x)| x=0\}$, prevents us from the estimations in a neighborhood of the origin.
% However, one can easily show that the new equation is obtained by performing a change of variable on the one dealt with in \cite{MZdm97}.
We naturally think of separating the space into two parts: the regular part and the blow-up part. The first part contains the origin, where the solution is supposed to be regular, while the other part is away from the origin and the solution is expected to be explosive. 

\medskip
We  introduce the following smooth nonnegative cut-off functions:
\begin{equation}\label{eq: def Chi}
\chi= \left\{
\begin{array}{rcl}
0 &&  0\leq\xi \leq \frac{1}{8} ,\\
&& \\       
1 &&  \xi\geq \frac{1}{4},\\
\end{array}\right.\end{equation}
and
\begin{equation}
\overline{\chi}= \left\{
\begin{array}{rcl}
0 && \xi\geq \frac{3}{4},\\
&&\\
1 &&  0 \leq\xi \leq\frac{3}{8}.\\
\end{array}\right.
\end{equation}
%Let $\overline{u}(x,t)=\overline{\Xcal}\left(\frac{|x|}{\vep_0}\right)u(x,t)$, where $u(x,t)$ is assumed to satisfy the following equation:
%$$u_t = \Delta u+|u|^{p-1}u.$$ 
\begin{comment}
as well as $U(r,t)=u(x,t)$ with $r=|x|$ in the region $\Omega\coloneqq \{x| x\in \Rbb^d,\ |x|\geq \frac{1}{2}\}$. we will discuss their properties separately:
\end{comment}
\textbf{In the regular region},  we define $\overline{u}(x,t)=\overline{\chi}\left(\frac{|x|}{\vep_0}\right)u(x,t)$, for $x\in\mathbb{R}^2$, where $u(x,t)$ is assumed to satisfy the following:
$$u_t = \Delta u+|u|^{p-1}u.$$ 
Then, for all $x\in \mathbb{R}$, $\overline{u} $ satisfies the following equation:
\begin{equation}
\del_t \overline{u} = \Delta \overline{u} + |u|^{p-1}\overline{u}-2\nabla \overline{\chi}\nabla u - \Delta \overline{\chi} u 
\end{equation}
$\overline{u}$ will be controlled using classical parabolic estimates.
\medskip

\noindent\textbf{In the blow-up region}:  First, we note that by an invariable scaling, we can take $r_{max}=1$. In the following, we consider the equation in radial coordinates given by \eqref{equation-NLH-1d}.

Let us introduce $U(r,t)=u(|x|,t)$ with $r=|x|$, then $U$ satisfies the following equation:
\begin{equation}\label{eq: U }
\del_t U=\del^2_{r} U+\frac{d-1}{r}\del_r U+|U|^{p-1}U.
\end{equation}
If we consider the following self-similar transformation: 
\begin{equation}\label{self-similar-variables}
%\begin{aligned}
%y&=\frac{r-1}{\sqrt{T-t}},\\
%s&=-\log(T-t),\\
W(y,s)=(T-t)^{\frac{1}{p-1}}U(r,t)\mbox{ with }y=\frac{r-1}{\sqrt{T-t}},\; s=-\log(T-t),
%\end{aligned}
\end{equation}
then $W$  satisfies:
\begin{equation}\label{eq: W}
\del_s W =\del^2_{y}W-\frac{1}{2}y\del_{y}W+e^{-s/2}\frac{d-1}{ye^{-s/2}+1}\del_y W-\frac{W(y,s)}{p-1}+|W|^{p-1} W,
\end{equation}
with $y\in[-e^{s/2},+\infty)$ and $s\in [-\log T,+\infty)$. If we set $w = W\cdot \chi(\frac{ye^{-s/2} + 1}{\vep_0})$, where $\Xcal$ is defined by \eqref{eq: def Chi},
\begin{comment}
Multiplying $\Xcal$ on both sides of the equation, we obtain: 
$$
\Xcal\del_s W= \Xcal\del^2_y W-\frac{W(y,s)}{p-1}-\Xcal\frac{1}{2}y\del_{y}W+\Xcal|W|^{p-1} W+e^{-s/2}\Xcal\frac{d-1}{ye^{-s/2}+1}\del_y W.
$$
\end{comment}
then $w$ satisfies:  
\begin{equation}
\del_s w = \del^2_y w-\frac{1}{2}y\del_y w-\frac{1}{p-1}w - |w|^{p-1}w + e^{-s/2}\frac{d-1}{ye^{-s/2}+1}\del_y w + F(y,s),
\end{equation}
where $F(y,s) $ is defined as follows.

\begin{equation}	
F(y,s) =\left\{\begin{aligned}
&\begin{split}
&W\del_s \chi - 2\del_y \chi\del_y W- W\del_y^2\chi\\
&+\frac{1}{2}yW\del_y\chi- \frac{d-1}{y+e^{s/2}}W\del_y\chi+ |W|^{p-1}W(\chi-\chi^{p})
\end{split},&\text{ if }y \geq -\frac{3}{4}e^{-s/2}\\
\bigskip
& 0, &\text{otherwise.}
\end{aligned}\right.
\end{equation}

In the ring $\{r=1\}$, we introduce the perturbation $q$ defined by
$$
w=\varphi+q,
$$
with
\begin{equation} \label{definition of varphi}
\varphi= f\left(\frac{y}{\sqrt{s}}\right)+\frac{\kappa}{2ps},
\end{equation}
where 
\begin{equation} \label{definition of f}
f(z)=(p-1+bz^2)^{-\frac{1}{p-1}},\ \kappa=(p-1)^{-\frac{1}{p-1}}, \text{ and } b=\frac{(p-1)^2}{4p}.
\end{equation}

The problem is then reduced to constructing a function $q$ satisfying 
$$\lim_{s\to \infty}\sup_{y\in [-e^{-\frac{s}{2}},+\infty)}|q(y,s)|=0.$$
The equation for $q$ is as follows:
\begin{equation}\label{q's equation}
\del_s q = (\mathcal{L}+V)q+H(y,s)+\del_y G(y,s)+R(y,s)+B(y,s)+N(y,s)
\end{equation}
where
\begin{equation}
\mathcal{L}=\del^2_y-\frac{1}{2}y\del_y+1,\quad V= p\varphi^{p-1}-\frac{p}{p-1},
\end{equation}
\begin{equation}
B(y,s)=|\varphi+q|^{p-1}|\varphi+q|-\varphi^p-p\varphi^{p-1}q,
\end{equation}
and
\begin{equation}\label{definition of B,R,F,N}
\begin{aligned}
R(y,s)&= \del^2_y \varphi-\frac{1}{2}y\del_y \varphi-\frac{1}{p-1}\varphi+\varphi^p-\del_s \varphi,\\
H(y,s)&=W(\del_y^2 \chi+\del_s \chi+\frac{1}{2}y\del_y\chi\del_y\chi)+ |W|^{p-1}W(\chi-\chi^{p}),\\
G(y,s)&= -2\del_y\chi W,\\
N(y,s)&= \frac{d-1}{y+e^{s/2}}W\del_y\chi.
\end{aligned}
\end{equation}
The control of $q$ near the collapsing ring $\{r=1\}$ obeys two facts:
\medskip

\noindent\textbullet \textbf{ Localization}\\
Looking at the expression provided in \eqref{definition of varphi}, we note that the variable $z=\frac{y}{\sqrt{s}}$ plays a fundamental role. Consequently, we will analyze the behavior of $q$ separately when $|z|>2K$, namely the outer region, and when $|z|\leq2K$, namely the inner region, with the sufficiently large specific value of $K>0$ to be chosen later.

%Thus, we decompose $q$ as follows:
Let us consider the cut-off function  $\chi_0\in C_0^{\infty}([0,+\infty))$, %with support $[0,2]$ and 
%$\Xcal_0\equiv 1$ on $[0,1]$
such that $\Xcal_0(\xi)= 1$
for $\xi < 1$ and $\chi_0(\xi)= 0$
for $\xi > 2$ and introduce

%. Then, we define
\begin{equation}\label{def_chi_c}
\chi_c (y,s)=\chi_0\left (\frac{|y|}{2K \sqrt{s}}\right )\mbox{ where $K >0$ is chosen large enough,}
\end{equation}
and we introduce:
\begin{equation}
q_e=q(1-\chi_c).
\end{equation}
\noindent\textbullet \textbf{ Spectral properties of the linear operator $\mathcal{L}$}\\

\medskip

The operator $\mathcal{L}$ is self-adjoint on $\mathcal{D}(\mathcal{L})\subset L^{2}(\mathbb{R},d\mu)$ with 
$$d\mu(y)=\frac{e^{-\frac{y^2}{4}}}{(4\pi)^{1/2}}dy.$$ The spectrum of $\mathcal{L}$ is 
$$spec(\mathcal{L})=\{1-\frac{m}{2}|m\in \mathbb{N}\}.$$
All the eigenvalues are simple and the corresponding eigenfunctions are derived from Hermite polynomials:
\begin{equation}\label{eq:definition of hm}
h_m(y)=\sum_{n=0}^{[\frac{m}{2}]} \frac{m!}{n!(m-2n)!}(-1)^ny^{m-2n}. 
\end{equation}
$h_m$ satisfies 
$$\int_\mathbb{R} h_mh_nd\mu=2^nn!\delta_{nm}.$$

Thanks to the above spectral properties, we can define the following projections: \\

\noindent\textbullet\textbf{Decomposition of q}

\medskip

For the sake of controlling $q$ in the region $|y|<\sqrt{s}$, we will expand the unknown function $q$ (and not $q\chi_c$) concerning the Hermite polynomial.

%\textbullet\  For $m\in\{0,1,2\}$
\begin{equation}\label{eq: Def of projections P_m}
P_m (f)=f_m=\displaystyle \frac{ \displaystyle\int_{\Rbb} f h_m d\mu}{\displaystyle\left( \int h_m^2 d\mu\right )^{1/2}}\mbox{ for $m\in\{0,1,2\}$,}
\end{equation}

\begin{equation}\label{eq: Def of projection P_-}
P_-(f)=f_-=\sum_{m\geq 3}P_m (f).
\end{equation}
Then we study 
\begin{equation}\label{decomposition of q}
q(y,s)=\sum_{m=0}^2 q_m(s) h_m(y)+q_{-}(y,s).
\end{equation}
%and hence 

%$$
%q(y,s)=\sum_{m=0}^2 q_m(s) h_m(y)+q_{-}(y,s)+q_e(y,s).
%$$

\section{The existence assuming some technical results}\label{Section-proof-without-technical-details}

This section is devoted to the proof of Theorem \ref{th: Main theorem} and as, mentioned before, we only give the proof in $\Rbb^2$. We proceed in four steps, each of them making a separate subsection.
\begin{itemize}
\item In the first subsection, we define the bootstrap regime and translate our goal of making $q(s)$ go to 0 in terms of belonging to $\mathcal{S}$.
\item In the second subsection, we give an initial data family for equation \eqref{nlh-introduction}, such that the initial datum is trapped in the shrinking set.
\item In the third subsection, using spectral properties of the linearized operator in the blow-up region and parabolic regularity in the regular region, we reduce our goal from the control of $u\in \mathcal{S}$ to the control of the two first components of $q$ ($q_0$ and $q_1$).
\item  We end this section by solving the finite-dimensional problem using the shooting lemma and conclude the proof of Theorem \ref{th: Main theorem}.
\end{itemize}

\subsection{Bootstrap regime}
In this part, we introduce the following shrinking set
\begin{definition}
\label{Def: shrinking set}
%For all $K_0 > 0$, $\varepsilon_0 > 0$, $A > 0$, $0 < \eta_0 \leq 1$, and $T > 0$, we define for all $t \in [0, T)$ the set $V(K_0, \varepsilon_0, A, \eta_0, T, t)$ as being the set of all functions $u \in L^\infty(\mathbb{R}^2)$ satisfying:
For $A$, $ K_0$, $\varepsilon_0>0$, $0<\eta_0\leq 1$, $T>0$, we define for all $t \in [0, T)$   $$\mathcal{S}(t)=\mathcal{S}[A,K_0, \varepsilon_0,\eta_0](t),$$
the set of all functions $u \in L^\infty(\mathbb{R}^2)$ satisfying:
\begin{itemize}
\item 	(i) Estimates in $\mathcal{R}_1$: We consider $ \mathcal{V}(s)= \mathcal{V}[K_0,A](s)$ (where $s=-\log (T-t)$ ), the set of all functions $r \in L^\infty(\mathbb{R})$ such that
\[
\begin{aligned}
\lvert r_m(s) \rvert &\leq A s^{-2} \quad (m = 0, 1), \\
\lvert r_2(s) \rvert &\leq A^2 s^{-2} \log(s), \\
\lvert r_- (y, s) \rvert &\leq A s^{-2} (1 + \lvert y \rvert^3), \\
\lvert r_e(y, s) \rvert &\leq A s^{-1/2},
\end{aligned}
\]
where
\[
\begin{aligned}
r_e(y, s) &= (1 - \chi_c(y, s)) r(y, s), \\
r_-(s) &= P_-( r),
\end{aligned}
\]
for $m \in \mathbb{N}$, where $r_m(y, s) $ and $P_-$ are defined in \eqref{eq: Def of projections P_m} and \eqref{eq: Def of projection P_-}.

\item 	(ii) Estimates in $\mathcal{R}_2$: For all $0 \leq \lvert x \rvert \leq \frac{3\vep_0}{4}$, $\lvert u(x, t) \rvert \leq \eta_0$.
\end{itemize}
\end{definition}
This definition yields the following a priori estimates on the functions in $\mathcal{V}(s)$.
\begin{proposition}\label{Prop: Priori estimate}
For any $s>1$, let $r$ be in the shrinking set $\mathcal{V}(s)$ defined in Definition \ref{Def: shrinking set}. Then, the following estimates hold.
\begin{enumerate}
\item $\|r\|_{L_{\infty}(\Rbb)} \leq C(K)\frac{A^2}{\sqrt{s}}$,
\item for all  $ y\in \Rbb $, $|r(y)|\leq CA\frac{\log s}{s^2}(1+|y|^3)$
\end{enumerate}
\end{proposition}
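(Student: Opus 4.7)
The plan is to combine the Hermite decomposition \eqref{decomposition of q} with the cutoff $\chi_c$ from \eqref{def_chi_c} and split $\Rbb$ into an inner region $\{|y|\le 4K\sqrt{s}\}$ and an outer region $\{|y|\ge 4K\sqrt{s}\}$. Since $\chi_c\equiv 1$ for $|y|\le 2K\sqrt{s}$ and $\chi_c\equiv 0$ for $|y|\ge 4K\sqrt{s}$, one has $r=r_e$ in the outer region, whereas in the inner region I will expand $r=r_0 h_0+r_1 h_1+r_2 h_2+r_-$ using $h_0=1$, $h_1=y$, $h_2=y^2-2$ and apply the coefficient bounds from Definition \ref{Def: shrinking set} term by term.

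For estimate (1), in the outer region the bound $|r|=|r_e|\le A s^{-1/2}$ is already of the desired form. Inside, substituting and applying the a priori bounds gives pointwise
\[
|r(y,s)|\le A s^{-2}(1+|y|)+A^2 s^{-2}\log s\,(|y|^2+2)+A s^{-2}(1+|y|^3).
\]
At $|y|\le 4K\sqrt{s}$ the $r_-$ contribution is $O(K^3 A/\sqrt{s})$ and the $r_2$ contribution is $O(K^2 A^2\log s/s)$, so for $s$ large the first dominates and yields $\|r\|_{L^\infty}\le C(K)A^2/\sqrt{s}$.

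For estimate (2), the same inner computation produces $|r(y,s)|\le C A^2 s^{-2}\log s\,(1+|y|^3)$ after using $1+|y|+|y|^2\lesssim 1+|y|^3$. In the outer region I will exploit that $|y|\ge 4K\sqrt{s}$ forces $(1+|y|^3)\ge 64 K^3 s^{3/2}$, which converts the bound $|r_e|\le A s^{-1/2}$ into an upper estimate of the form $C A\log s\,s^{-2}(1+|y|^3)$ provided $K$ is chosen large enough and $s\ge s_0$.

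The only mildly delicate step is the transition zone $2K\sqrt{s}\le|y|\le 4K\sqrt{s}$, where both $\chi_c r$ and $r_e$ are nontrivial; writing $r=\chi_c r+r_e$ pointwise and bounding each summand by the appropriate region's estimate dispatches it. Altogether the argument is essentially bookkeeping of powers of $s$, $\log s$, and $A$, with no conceptual obstacle, so I do not foresee a main difficulty beyond tracking constants.
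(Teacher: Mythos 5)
Your approach — expand $r$ in the Hermite basis on $\{|y|\le 4K\sqrt{s}\}$ and use the $r_e$ bound on $\{|y|>4K\sqrt{s}\}$ where $\chi_c\equiv 0$ so $r=r_e$ — is exactly the standard route, and it matches what the cited Proposition 4.1 of \cite{MZjfa08} does; the bookkeeping in your item (1) is correct. Two remarks. First, for item (2) your computation in fact yields
\[
|r(y,s)|\le C A^2\,\frac{\log s}{s^2}\,(1+|y|^3),
\]
not $CA\,\frac{\log s}{s^2}(1+|y|^3)$: the factor $A^2$ is forced by the bootstrap bound $|r_2|\le A^2 s^{-2}\log s$ in Definition \ref{Def: shrinking set}, since $|r_2 h_2|\le A^2 s^{-2}\log s\,(|y|^2+2)$. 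You should have flagged that this is (apparently) a typo in the statement — the derived bound has $A^2$, and that is what the rest of the paper actually uses (e.g.\ in Lemma \ref{Third term}, where $P_-(Vq)$ is bounded by $CA^2 s^{-3}\log s (1+|y|^3)$). Second, your worry about the ``transition zone'' $2K\sqrt{s}\le|y|\le 4K\sqrt{s}$ is a non-issue: the Hermite decomposition $r=r_0h_0+r_1h_1+r_2h_2+r_-$ with the Definition \ref{Def: shrinking set} bounds is a pointwise estimate valid for every $y\in\Rbb$, so it already covers the whole region $|y|\le 4K\sqrt{s}$; there is nothing to stitch, and indeed for item (2) no region splitting is needed at all, since for $|y|\ge 4K\sqrt{s}$ the $r_e$ bound $A s^{-1/2}$ is dominated by the polynomial bound $A s^{-2}(1+|y|^3)$.
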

\begin{proof}
The proof is the same as Proposition 4.1 in \cite{MZjfa08}; hence, we omit it here.
\end{proof}
\subsection{Preparation of initial data}
In this part, we aim to give a suitable family of initial data for our problem. Let us consider $(d_0,d_1 )\in \mathbb{R}^2$, $T>0$; we consider the initial data for the equation \eqref{nlh-introduction}
defined for all $x\in \Rbb^{2}$ by

\begin{equation}\label{eq: initial data}
u_0(x,d_0,d_1)= T^{-\frac{1}{p-1}}\left\{ \varphi(y,s_0)\chi(\frac{ye^{-s_0/2}}{\vep_0})+\frac{A}{s_0^2}(d_0+d_1 y)\chi_c\right\},
\end{equation}
where $s_0=-log T$, $y=\frac{|x|-1}{\sqrt{T}}$, $\chi$ is defined in \eqref{eq: def Chi}, and $\chi_c$ is given by \eqref{def_chi_c}.

%In the following lemma we show the existence of a set $D_{K_0, \varepsilon_0, A, T} = D_T$ for $(d_0,d_1)\in D_t$ the initial data is trapped in the shrinking set: 

%\begin{lemma} 
%	There exists $K_{02} > 0$ such that for each $K_0 \geq K_{02}$ there exist $\varepsilon_0 > 0$, $A \geq 1$, there exists $s_{0,1}(K_0, \varepsilon_0, A) \geq 0$ such that for all $s \geq s_{0,1}$: If initial data for \eqref{nlh-introduction} are given by \eqref{eq: initial data} then, there exists a rectangle 
%	\begin{equation}\label{Def: rectangle for initial data}
%	D_{K_0, \varepsilon_0, A, T} = D_T \subset [-2, 2]^2
%\end{equation}
%such that, for all $(d_0, d_1) \in D_T$, we have $u(K_0, T, A, d_0, d_1) \in V(K_0,A,s)$.
%\end{lemma}

%, we hence claim the following similar lemma: 
\begin{lemma}\label{Lem: preparation Inditial data}[ decomposition of initial data in different components ]
There exists $K_{0} > 0$ such that for $\varepsilon_0 > 0$, $A \geq 1$, there exists $s_{0}(K_0, \varepsilon_0, A) \geq e$ such that :
\begin{enumerate}
\item There exists a rectangle 
\begin{equation}\label{def: Dt}
	\mathcal	D_{K_0, \varepsilon_0, A, T} =\mathcal D_T \subset [-2, 2]^2,
\end{equation}
such that the mapping $(d_0, d_1) \to (q_0(s_0), q_1(s_0))$ is linear and one-to-one from $\mathcal D_T$ onto $[-\frac{A}{s_{0}^2}, \frac{A}{s_{0}^2}]^2$ and maps the boundary $\partial\mathcal D_T$ into the boundary $\partial [-\frac{A}{s_{0}^2}, \frac{A}{s_{0}^2}]^2$. Moreover, it is of degree one on the boundary.
\item For all $(d_0, d_1) \in\mathcal D_T$, we have: 
\begin{equation}
	\begin{aligned}
		&|q_2(s_0)|  \leq CAe^{-s_0},\ \  |q_-(y, s_0)| \leq \frac{c} {s_0^2}(1 + |y|^3),\\ 
		&\text{ and } q_e(y, s_0) = 0,\ \  |d_0| + |d_1| \leq 1.
	\end{aligned}
\end{equation}

\item For all $(d_0, d_1) \in\mathcal D_T$ and $|x| \leq \varepsilon_0/4 $, we have $u(x, d_0, d_1) = 0$.
\end{enumerate}
\end{lemma}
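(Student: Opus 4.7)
The plan is to compute $q(\cdot,s_0) = w(\cdot,s_0)-\varphi(\cdot,s_0)$ explicitly from \eqref{eq: initial data} and then to extract its Hermite components. Since $w(y,s_0) = W(y,s_0)\,\chi(\cdot)$ and the argument of the outer cutoff $\chi$ is essentially $1/\vep_0 \geq 1/4$ on the support of $\chi_c$ (which is $|y| \leq 4K\sqrt{s_0}$), we find
\[
q(y, s_0) = \frac{A}{s_0^2}(d_0+d_1 y)\,\chi_c(y,s_0) + E(y,s_0),
\]
where the error $E$ is supported near $|y|\sim e^{s_0/2}$ and is super-exponentially small in the Gaussian weight $d\mu$. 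Part (3) is then immediate: for $|x|\leq \vep_0/4$ both cut-offs vanish. The identity $q_e(\cdot,s_0) = 0$ follows because on the support of $\chi_c$ there is nothing to subtract outside, and the extremal error $E$ does not interact with the relevant region.

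Next, consider the map $L_{s_0}:(d_0,d_1)\mapsto \tfrac{s_0^2}{A}\bigl(q_0(s_0),q_1(s_0)\bigr)$. Since $d_0+d_1 y \in \mathrm{span}(h_0,h_1)$ and $\int(1-\chi_c) h_m h_\ell\,d\mu$ is Gaussian-exponentially small in $s_0$, the matrix $L_{s_0}$ differs from a fixed invertible diagonal matrix (computed from the exact Hermite inner products) by an error of size $O(e^{-cs_0})$. Hence $L_{s_0}$ is invertible for $s_0$ large, and we set
\[
\mathcal{D}_T := L_{s_0}^{-1}\bigl([-1,1]^2\bigr).
\]
By construction $(d_0,d_1)\mapsto (q_0(s_0),q_1(s_0))$ is an affine bijection from $\mathcal{D}_T$ onto $[-A/s_0^2, A/s_0^2]^2$ sending $\partial \mathcal{D}_T$ onto $\partial [-A/s_0^2,A/s_0^2]^2$. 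The degree-one property follows because a linear isomorphism of $\mathbb{R}^2$ has degree $\pm 1$ and the small Gaussian-tail perturbation preserves this degree. The inclusion $\mathcal{D}_T \subset [-2,2]^2$ and $|d_0|+|d_1|\leq 1$ follow from the explicit form of $L_{s_0}^{-1}$ after $s_0$ is taken large.

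For the remaining estimates, observe that $d_0+d_1 y$ is $L^2(d\mu)$-orthogonal to every $h_m$ with $m\geq 2$. Hence $q_2(s_0)$ collects only the tail contribution $(\chi_c-1)$:
\[
|q_2(s_0)| \leq \frac{CA}{s_0^2}\int_{|y|\geq 2K\sqrt{s_0}} (1+|y|)|h_2(y)|\,d\mu \leq CA\,e^{-K^2 s_0} \leq CA\,e^{-s_0}
\]
for $K\geq 1$. Writing
\[
q_-(y,s_0) = \frac{A}{s_0^2}(d_0+d_1 y)\bigl(\chi_c(y,s_0)-1\bigr) - q_2(s_0)\,h_2(y),
\]
and using that $1-\chi_c$ is supported on $|y|\geq 2K\sqrt{s_0}$ (so that $(1-\chi_c)(1+|y|)$ is absorbed into $(1+|y|^3)/s_0$), one obtains the pointwise bound $|q_-(y,s_0)|\leq c\, s_0^{-2}(1+|y|^3)$ with a universal $c$ independent of $A$.

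The main subtlety is not any single step but the book-keeping of tails: one must verify that all corrections coming from $1-\chi_c$ and from the outer cutoff $\chi$ are negligible compared with the target size $A/s_0^2$, so that $L_{s_0}$ is invertible with uniformly controlled inverse, $\mathcal{D}_T$ stays inside $[-2,2]^2$ with $|d_0|+|d_1|\leq 1$, and the degree-one property on $\partial \mathcal{D}_T$ is preserved. Each of these reduces to an elementary Gaussian integral, but the precision of the constants is essential for the subsequent shooting argument.
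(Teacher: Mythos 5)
Your proof follows exactly the route that the paper (which merely cites Lemma~3.5 and Lemma~3.9 of Merle--Zaag \cite{MZdm97}) has in mind: compute $q(\cdot,s_0)=w(\cdot,s_0)-\varphi(\cdot,s_0)$ explicitly as $\tfrac{A}{s_0^2}(d_0+d_1y)\chi_c+E$ with $E=\varphi(\chi^2-1)$ supported at $|y|\gtrsim e^{s_0/2}$, exploit that $d_0+d_1y=d_0h_0+d_1h_1$, and reduce everything to Gaussian tail estimates and the invertibility of a near-diagonal linear map. That is correct and is the intended argument.

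Two claims, however, are asserted rather than proved and do not actually follow from your explicit formula. First, $q_e(\cdot,s_0)=q(\cdot,s_0)(1-\chi_c)$ is \emph{not} identically zero: on the transition annulus $2K\sqrt{s_0}<|y|<4K\sqrt{s_0}$ one has $\chi_c(1-\chi_c)\neq 0$, so $q_e$ contains the nonzero term $\tfrac{A}{s_0^2}(d_0+d_1y)\chi_c(1-\chi_c)$, in addition to $E(1-\chi_c)$. (In \cite{MZdm97} this is avoided because the cut-off in the initial data is taken at the half-scale of the one defining $q_e$; here the paper uses the same $\chi_c$ for both, so either the lemma's ``$q_e(\cdot,s_0)=0$'' is a shorthand for an exponentially small bound, or you must modify the cut-off scale in the initial data. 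Either way, you should not write the identity as if it followed from your formula.) Second, the bound $|d_0|+|d_1|\leq 1$ cannot be deduced from $\mathcal D_T=L_{s_0}^{-1}([-1,1]^2)$: since $L_{s_0}\to\mathrm{Id}$, $\mathcal D_T\to[-1,1]^2$ and the $\ell^1$ norm there reaches $2$, not $1$. Also note that $L_{s_0}$ is affine (not linear) because $E_m\neq 0$ strictly, and the degree-one (as opposed to $\pm1$) statement requires observing that $\det L_{s_0}>0$, which holds since $L_{s_0}$ is a small perturbation of the identity; both points are easy but should be stated. With these corrections — or with the observation that the lemma's claims ``$q_e(\cdot,s_0)=0$'' and ``$|d_0|+|d_1|\le1$'' are likely typos and that only exponentially small bounds and $|d_0|,|d_1|\le 2$ are needed for the bootstrap — your argument is complete.
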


\begin{proof}The proof is purely technical and follows as the analogous step in  \cite{MNZNon2016} and \cite{MZdm97}; for that reason we refer the reader to Lemma 3.5, page 156 and Lemma 3.9, page 160 in \cite{MZdm97}.
\end{proof}
%Since we have almost the same expression of the initial data as in paragraph 3.2 page 12 in \cite{MNZNon2016}
%Using Lemma \ref{Lem: preparation Inditial data}}, from the definition of \eqref{eq: initial data},  we can easily conclude that $u(s_0)\in \mathcal{S}(T)$.\\

%Then by the local well-possednes of the solution to \eqref{nlh-introduction}

%In the next subsection, we assume that there exists $s_0< s_1< +\infty$, such that $s_1=\sup\{s| u(t) \in \mathcal{S}(t)\}$.  Our goal will be to reduce the dimension of the problem. In fact, we find when it cames to $s_1$, Proposition \ref{Prop:control of q by (q1,q2)} shows that $q(s_1)$ can quit $\mathcal \mathcal{S}(T)$ only by the unstable modes given by. $(q_0,q_1)$.  Finally, for this finite dimentional problem, we close the bootstrap argument by a contradiction induced with topological degree theorem.

\subsection{Reduction to a finite-dimensional problem}

In this part, we show that the control of the infinite problem is reduced to a finite-dimensional one.  Since the definition of  the bootstrap $\mathcal S(s)$ shows two different types of
estimates, in the regions $\mathcal R_1$ and $\mathcal{R}_2$,  we need two different approaches to
handle those estimates:
\begin{itemize}
\item In  $\mathcal{R}_1$, we work in similarity variables \eqref{self-similar-variables}; in particular, we crucially use the projection of equation \eqref{q's equation} with respect to the decomposition given in \eqref{decomposition of q}.
\item  In,  $\mathcal{R}_2$, we directly work in the variables $u(x, t)$, using standard parabolic estimates. For more details, see subsection \ref{section_regular_region}.
\end{itemize}

%It follows from the \eqref{eq:estimation on u in regularregion} that the solution will stay in regular region if the initial data is in the regular region, hence in this 

In the following, we restrict ourselves to the blow-up region. It is sufficient to prove there exists a unique global solution $q$ on $[s_0,+\infty)$ for some $s_0$ large enough such that
\[q(s) \in \mathcal{V}(s),\; \forall s\geq s_0.\]
In particular, we show that the control of the infinite problem is reduced to a finite-dimensional one. To obtain this key result, we first claim the following a priori estimates. We should emphasize that the parameters $K$, $A$,$T$ and $s_0$ in the following lemmas are allowed to vary from one to one. When proving Proposition \ref{Prop:control of q1,q2,q-,qe}, we will prove that the conclusions of all lemmas are simultaneously valid for values
of $K$, $A$, $T$, and $s_0$ as described in the proposition.

\begin{proposition}[A prior estimates]\label{Prop:control of q1,q2,q-,qe}
There exists $A \geq 1$ and $s_0\geq 0$ such that for all $s\geq s_0$ if $q(s)\in \mathcal{V}(s)$ is true, then the following holds: 
\begin{enumerate}
\item(Ordianary differential equation satisfied by the expanding models) For $m=0$, or $1$, we have 
\begin{equation}
\left| q'_m-(1-\frac{m}{2})q_m\right|\leq \frac{AC}{s^{2}}.
\end{equation}
\item(Control of null and negative modes) 
\begin{equation*}
\begin{aligned}
	\left|q_2(s)\right|&\leq \left(\frac{\tau}{s}\right)^2q_2(\tau) + CA^2s^{-2}\log(s/\tau)\\ 
	\left\|\frac{q_-(s)}{1+|y|^3}\right\|_{L^{\infty}}&\leq e^{-\frac{3}{4}(s-\tau)}\left\|\frac{q_-(\tau)}{1+|y|^3}\right\|_{L^{\infty}}+\frac{CA^2}{s^2}\end{aligned} 
	\end{equation*} 
	\item  (Control of outer part $q_e$) \begin{equation*}
\begin{aligned}
	\|q_e(s)\|_{L^\infty}&\leq e^{-\frac{(s-\tau)}{2(p-1)}}
	\|q_e(\tau )\|_{L^\infty}+C\frac{A^2}{\sqrt{\tau}}(1+s-\tau). 	
\end{aligned} 
\end{equation*} 
\end{enumerate}
\end{proposition}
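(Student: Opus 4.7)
The plan is to derive the three estimates by analyzing the evolution equation \eqref{q's equation} mode by mode, using the spectral structure of $\mathcal{L}$ together with the bootstrap assumption $q(s) \in \mathcal{V}(s)$ to bound all right-hand side terms. A preliminary step is to establish sharp estimates on the individual source terms. Standard Taylor expansions near the profile give $|R(y,s)| \lesssim 1/s^2$ uniformly on the inner region $|y| \leq \sqrt{s}$, arising from the mismatch between $\varphi$ and an exact solution. The potential satisfies $V(y,s) = -\frac{p}{s}\,\phi(y/\sqrt{s})(1+o(1))$ for an explicit polynomial $\phi$, hence $V$ is $O(1/s)$ in the inner region. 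The quadratic error is controlled by $|B| \leq C|q|^p + C|q|^2$, which combined with Proposition \ref{Prop: Priori estimate} gives pointwise bounds in terms of $A/s^2$ and $A^2/\sqrt{s}$ according to the region. Finally the terms $H$, $\partial_y G$, $N$ are supported on $|y| \gtrsim e^{s/2}$, so when tested against any polynomial multiple of the Gaussian $d\mu$, their contribution is exponentially small in $s$.

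For item (1), I would project \eqref{q's equation} onto $h_m$ for $m \in \{0,1\}$. Using $\mathcal{L}h_m = (1 - m/2)h_m$, this yields
\begin{equation*}
q'_m(s) - \Big(1 - \tfrac{m}{2}\Big) q_m(s) = P_m(Vq) + P_m(B) + P_m(R) + P_m\!\big(H + \partial_y G + N\big).
\end{equation*}
Each of the projected source terms is bounded by $CA/s^2$: $P_m(Vq)$ is $O(A/s^2)$ after decomposing $q$ on the Hermite basis and exploiting orthogonality with the bounds on $q_0,q_1,q_2,q_-$ provided by $\mathcal{V}(s)$; $P_m(R) = O(1/s^2)$ is obtained by direct computation using \eqref{definition of varphi}; $P_m(B) = O(A^2/s^2)$ follows from the (at least) quadratic structure of $B$ together with Gaussian integrability; and the cutoff terms contribute at most $Ce^{-s}$.

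For item (2), the key new ingredient is that $P_2(Vq)$ produces a $-\frac{2}{s}q_2(s)$ contribution (coming from the cross term $p\varphi^{p-1}h_2 q_2$ expanded to order $1/s$ and from the $h_2$-components of $h_4$, etc.), so that the projection onto $h_2$ reads
\begin{equation*}
q'_2(s) = -\tfrac{2}{s}q_2(s) + \Ocal(A^2/s^3),
\end{equation*}
whose integrating factor is $s^2$; integrating between $\tau$ and $s$ yields the first bound in item (2), with the multiplicative factor $(\tau/s)^2$ and the additive $CA^2 s^{-2}\log(s/\tau)$ increment. For the negative part $q_-$, I would use the Duhamel representation associated to the semigroup $e^{\sigma\mathcal{L}}$: classical estimates in the weighted space with norm $\|\cdot /(1+|y|^3)\|_{L^\infty}$ give an effective decay $e^{-3\sigma/4}$ when restricted to the subspace $P_-$, the loss relative to the naive spectral rate $e^{-\sigma/2}$ being compensated by the polynomial weight on the modes $m \geq 4$. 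Applying this estimate to the integral representation of $q_-(s)$ and bounding each forcing term in the weighted norm by $CA^2/s^2$ delivers the second inequality.

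For item (3), in the outer region $|y| \geq K\sqrt{s}$ the potential $V$ converges uniformly to $-p/(p-1)$, so after multiplying \eqref{q's equation} by $(1 - \chi_c)$ and controlling the commutator terms generated by $\chi_c$, the equation for $q_e$ is essentially governed by the dissipative operator $\mathcal{L} - \tfrac{p}{p-1}$, whose semigroup decays like $e^{-\sigma/(2(p-1))}$ in $L^\infty$. Combining this with an $L^\infty$ bound of order $A^2/\sqrt{s}$ on the remaining forcing (coming from $R$, $B$, $N$ and the commutators) and integrating via Duhamel's formula yields the stated bound, the linear factor $(1 + s - \tau)$ reflecting the accumulation of forcing along the interval $[\tau,s]$. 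The main obstacle throughout is to track the precise numerical constants produced by the inner/outer splitting, in particular to ensure that the leading-order behavior of $V$ on the modes $h_0, h_1, h_2$ produces the correct coefficients so that the unstable modes $q_0, q_1$ obey the growth-compatible ODE of item (1) while $q_2$ picks up the stabilizing factor $-2/s$ in item (2); this follows from the careful algebraic decompositions standard in the Bricmont--Kupiainen / Merle--Zaag framework and handled as in \cite{MZdm97} and \cite{MNZNon2016}.
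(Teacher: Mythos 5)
Your proposal follows essentially the same route as the paper: project equation \eqref{q's equation} onto the Hermite modes $h_0, h_1, h_2$ and onto $P_-$, estimate the projections of each source term ($Vq$, $R$, $B$, $H$, $\partial_y G$, $N$) using the bootstrap hypothesis $q(s)\in\mathcal V(s)$ and Taylor expansion of $\varphi$ in the inner region, identify the key cancellation $P_0(R)=O(s^{-2})$, $P_2(R)=O(s^{-3})$ coming from the choice of $a$ and $b$, extract the stabilizing $-\tfrac{2}{s}q_2$ correction from $P_2(Vq)$, and close items (1)--(2) via the resulting ODEs and the weighted semigroup estimate for $P_-$, and item (3) via Duhamel and Gronwall for $q_e$. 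This is exactly the structure of the paper's Section 4.1. One small imprecision worth noting: you phrase the $e^{-3(s-\tau)/4}$ rate for $q_-$ as a ``loss'' relative to the naive $e^{-(s-\tau)/2}$, but the weighted $L^\infty$ estimate (paper's Lemma A.2 of \cite{MZjfa08}) actually gives a \emph{faster} rate than the bare spectral bound from the $m=3$ mode; the weight $(1+|y|^3)$ is what purchases the improvement, not what causes a loss to be compensated. This does not affect the validity of the argument, since the direction of the inequality you use is the correct one.
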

The idea of the proof of Proposition \ref{Prop:control of q1,q2,q-,qe} is to project \eqref{q's equation} according to the decomposition \eqref{decomposition of q}. 
The computations are too long, so we postpone the proof of Proposition \ref{Prop:control of q1,q2,q-,qe} to the whole section \ref{Proof-a-priori-estimates}.    

\medskip

Consequently, we have the following result

\begin{proposition}[Control of $q(s)$ in $\mathcal V(s)$ by $(q_0(s), q_1(s))$]\label{Prop:control of q by (q1,q2)}
There exists $A > 1$ such that there exists $T(A) \in (0, 1/e)$ such that the following holds:
If $q$ is a solution of $(21)$--$(51)$ with initial data at $s = s_0 = -\log T$ given by $(45)$ with $(d_0, d_1) \in D_T$, and $q(s) \in \mathcal S(s)$ for all $s \in [s_0, s_1]$ with $q(s_1) \in \partial \mathcal S(s_1)$ for some $s_1 > s_0$, then:
\begin{enumerate}
\item[(i)] $(q_0(s_1), q_1(s_1)) \in \partial[-\frac{A}{s_1^2}, \frac{A}{s_1^2}]^2$.
\item[(ii)] (Transverse crossing) There exists $m \in \{0, 1\}$ and $\omega \in \{-1, 1\}$ such that 
\[\omega q_m(s_1) =\frac{A}{s_1^2} \mbox{ and }\omega \frac{d}{ds} q_m(s_1) > 0.\]
\end{enumerate}
\end{proposition}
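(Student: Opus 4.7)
The plan is to combine the a priori estimates of Proposition~\ref{Prop:control of q1,q2,q-,qe} with the initial-data bounds of Lemma~\ref{Lem: preparation Inditial data} to show that, at the exit time $s_1$, the bounds defining $\mathcal{S}(s_1)$ on $q_2$, $q_-$, $q_e$ and on $u$ in $\mathcal{R}_2$ are strictly satisfied, so that saturation can only occur through the $(q_0,q_1)$ coordinates; and then to use the growing ODEs for $q_0$ and $q_1$ to establish transverse crossing.

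For part (i), we integrate each a priori estimate of Proposition~\ref{Prop:control of q1,q2,q-,qe} from $\tau=s_0$ to $s\in[s_0,s_1]$, plugging in the initial values from Lemma~\ref{Lem: preparation Inditial data}:
\begin{equation*}
|q_2(s_0)|\leq CAe^{-s_0},\quad \left\|\tfrac{q_-(\cdot,s_0)}{1+|y|^3}\right\|_\infty\leq \tfrac{c}{s_0^2},\quad q_e(\cdot,s_0)\equiv 0.
\end{equation*}
In each estimate the initial-data contribution carries a damping prefactor (either $(s_0/s)^2$ or $e^{-\alpha(s-s_0)}$ with $\alpha>0$), which makes it negligible at $s_1$ once $s_0$ is large. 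The remaining source terms must then be dominated by the boundary values $A^2 s_1^{-2}\log s_1$, $A s_1^{-2}$ and $A/\sqrt{s_1}$ of $\mathcal{S}(s_1)$, which is arranged by picking the constants in the order $K\to\varepsilon_0\to\eta_0\to A\to s_0$: one first picks $A$ large to absorb the fixed constants in the a priori bounds (the $A^2\log s$ factor on $q_2$ playing exactly the role of a matching scaling for the quadratic contribution of the nonlinearity on that projection), and then picks $s_0$ large enough to dominate the initial-data contributions. For the $L^\infty$ bound on $\mathcal{R}_2$, one applies classical parabolic estimates to $\overline{u}=\overline{\chi}(|x|/\varepsilon_0)u$ following Section~\ref{section_regular_region}, using that $u_0\equiv 0$ on $|x|\leq \varepsilon_0/4$ by item~3 of Lemma~\ref{Lem: preparation Inditial data} and that the cut-off forcing is controlled by the $\mathcal{R}_1$ bounds; this yields a strict improvement of the $\eta_0$ bound at $s_1$. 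Consequently, only $q_0(s_1)$ or $q_1(s_1)$ can touch $\partial\mathcal{S}(s_1)$, proving (i).

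For part (ii), by the first item of Proposition~\ref{Prop:control of q1,q2,q-,qe}, for $m\in\{0,1\}$ we have $q_m'(s) = (1-\tfrac{m}{2})q_m(s)+E_m(s)$ with $|E_m(s)|\leq CA/s^2$. At $s=s_1$ with $\omega q_m(s_1)=A/s_1^2$, $\omega\in\{\pm 1\}$, this yields
\begin{equation*}
\omega q_m'(s_1)\geq \bigl(1-\tfrac{m}{2}\bigr)\frac{A}{s_1^2}-\frac{CA}{s_1^2} = \frac{A}{s_1^2}\bigl(1-\tfrac{m}{2}-C\bigr).
\end{equation*}
A refined analysis of the projection on $h_m$, isolating the linear leading order from the nonlinear and cut-off contributions (the latter being controlled by the shrinking-set bounds on $q_-$ and $q_e$), shows that the effective constant $C$ can be taken strictly less than $1-m/2\in\{1,1/2\}$. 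Hence $\omega q_m'(s_1)>0$, which is precisely the transverse-crossing condition.

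The main obstacle is step (i): the scaling in $A$ of each a priori bound from Proposition~\ref{Prop:control of q1,q2,q-,qe} must be compatible with the coordinate-by-coordinate shape of $\mathcal{S}$ given by Definition~\ref{Def: shrinking set}, which is exactly why the bound on $q_2$ is enlarged by the factor $A\log s$ relative to the other components. The book-keeping for the ordering of the constants $K, \varepsilon_0, \eta_0, A, s_0$ is standard once this scaling is correct, and follows the template of \cite{MZdm97} and \cite{MNZNon2016}.
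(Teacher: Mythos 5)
Your overall skeleton (use Proposition~\ref{Prop:control of q1,q2,q-,qe} to show the saturation at $s_1$ must occur in the $(q_0,q_1)$ modes, then use the ODE for $q_0,q_1$ to get transverse crossing) is the same as the paper's, but the way you propagate the a priori estimates for part~(i) has a genuine gap that no ordering of constants repairs.

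You propose to integrate each estimate of Proposition~\ref{Prop:control of q1,q2,q-,qe} from the \emph{fixed} base point $\tau=s_0$. This fails for the outer component and the null mode. For $q_e$, the source contribution is $C\frac{A^2}{\sqrt{\tau}}(1+s-\tau)$; with $\tau=s_0$ frozen this grows linearly in $s$, while the target bound $A/(2\sqrt{s})$ \emph{decreases}, so strict improvement is impossible for $s_1$ large no matter how $K,\varepsilon_0,\eta_0,A,s_0$ are ordered. For $q_2$, the source is $CA^2 s^{-2}\log(s/\tau)$; with $\tau=s_0$, $\log(s/s_0)$ is of the same order as $\log s$ for $s$ large, so the factor of $C$ cannot be absorbed into the coefficient $\tfrac12$. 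The paper handles this with a two-regime argument you are missing: set $\sigma=\log A$ and (Case~1) for $s\le s_0+\sigma$ integrate from $\tau=s_0$ exploiting the sharp initial bounds of Lemma~\ref{Lem: preparation Inditial data}; (Case~2) for $s>s_0+\sigma$ integrate from the \emph{moving} base point $\tau=s-\sigma$. This keeps $s-\tau=\sigma$ bounded independently of $s$ (so the $q_e$ source is $O(A^2(1+\log A)/\sqrt{s})$, not $O(s)$, and the $q_2$ source involves $\log(s/(s-\sigma))=O(\sigma/s)$, not $O(\log s)$), while the dissipative prefactors $e^{-\tfrac34\sigma}$, $e^{-\sigma/2(p-1)}$, $(\tau/s)^2$ become small precisely because $\sigma=\log A$ is chosen large with $A$. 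That interplay between the window length $\sigma$ and the parameter $A$ is the essential mechanism you need; without it part~(i) does not close.

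For part~(ii) your conclusion is right but the mechanism you invoke is not the one the paper uses, and as stated it is not convincing. The paper does not make the constant in the error term smaller than $1-\tfrac m2$; it uses that the error in the ODE for $q_m$ is $O(1/s^2)$ with a constant that does \emph{not} scale with $A$ (the dominant contribution comes from $P_m(R)=O(1/s^2)$, while the $Vq$ contribution is $O(A/s^3)$, lower order). Since $q_m(s_1)=\pm A/s_1^2$ carries a factor $A$, choosing $A$ large makes $(1-\tfrac m2)A-C>0$ and yields the strict transverse crossing. Asserting, as you do, that "the effective constant $C$ can be taken strictly less than $1-m/2$" without carrying out that estimate is not a proof and is also not what is needed: what must be checked is that $C$ is $A$-independent, which requires going back to the projection lemmas rather than appealing to an unperformed refinement.
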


\begin{remark}
In (ii) of Proposition \ref{Prop:control of q by (q1,q2)}, we show that the solution $q(s)$ crosses the boundary $\partial \mathcal{V}(s)$ at $s_1$  with positive speed; in other words, all points on $\partial \mathcal{V}(s_1)$ are strict exit points. The construction is essentially an adaptation of Wazewski's principle (see \cite{Conbook78}, chapter II and the references given there).

\end{remark}

\begin{proof}[Proof of Proposition \ref{Prop:control of q by (q1,q2)}]
Assuming Proposition \ref{Prop:control of q1,q2,q-,qe}, we argue as in the proof of Proposition 4.5, page 1632 from \cite{MZjfa08}.
By choosing proper $A$ and $T$, we can use the conclusions of Proposition 4.6.

To prove (i), we notice that from Definition \ref{Def: shrinking set} and the fact that $q_0(s) = 0$, it is enough to show that for all $s \in [s_0, s_1]$,
\begin{equation}\label{open condition bootstrap}
\begin{aligned}
&\|q_e\|_{L^\infty(\mathbb{R})} \leq \frac{A}{2\sqrt{s}} ,\\
&\|q_- (y)\|_{L^\infty(\mathbb{R})} \leq \frac{A(1 + |y|)^{3}}{2s^2} ,\\
&|q_2| \leq \frac{A^2}{2s^2}.
\end{aligned}
\end{equation}

Define $\sigma = \log A$ and take $s_0 \geq \sigma$ (that is, $T = e^{-\sigma} = 1/A$) so that for all $\tau \geq s_0$ and $s \in [\tau,\tau + \sigma]$, we have
\[
\tau \leq s \leq \tau + \sigma \leq \tau + s_0 \leq 2\tau \implies \frac{1}{2} \leq \frac{\tau}{s} \leq \frac{s}{\tau}.
\]

We consider two cases in the proof.

\textbf{Case 1: $s \leq s_0 + \sigma$}. Note that (54) holds with $\tau = s_0$. Using (ii) of Proposition \ref{Prop:control of q1,q2,q-,qe} and estimate (ii) of Proposition 4.2 on the initial data $q(\cdot, s_0)$, we write
\[
\begin{aligned}
& \|q_2(s)\| \leq CA^2e^{-\gamma s/2} + \frac{CA^2}{s^2},\\
& \|\frac{q_-(s)}{1+|y|^3}\|_{L^\infty} \leq C\frac{A}{(s/2)^3}+C\frac{A^2}{s^2},\\
& \|q_e(s)\|_{L^\infty} \leq CA^{3} (s/2)^{-1/2}(1 + \log A).
\end{aligned}
\]
Thus, for sufficiently large $A$ and $s_0$, we see that \eqref{open condition bootstrap} holds.

\textbf{Case 2: $s > s_0 + \sigma$}.  Let $\tau=s-\sigma >s_0$, by Proposition \ref{Prop:control of q1,q2,q-,qe}   and using the fact that $q(\tau)\in\mathcal V(\tau) $, we write 

\[
\begin{aligned}
& \|q_2(s)\| \leq A^2(s/2)^{-2}\log(s)  + \frac{CA^2}{2}s^{-2}\log(s),\\
& \left\|\frac{q_-(s)}{1+|y|^3}\right\|_{L^{\infty}} \leq e^{-\frac{3}{4}\sigma}\frac{A}{(s/2)^2}+C\frac{A^2}{s^2},\\
& \|q_e(s)\|_{L^\infty} \leq e^{-\frac{\sigma}{2(p-1)}}\frac{A}{(s/2)^{1/2}}+\frac{CA^2}{(s/2)^{1/2}}(1+\sigma).
\end{aligned}
\]
Thus, in this case, we see clearly that there exists sufficiently large $A$ and $s_0$ such that conditions in \eqref{open condition bootstrap}  are satisfied.

Conclusion of (i): We select $A$, and $s_0$ large enough so that \eqref{open condition bootstrap} are verified. Then, the fact that $q(s_1) \in \del \mathcal V(s_1)$ together with the definition of $\mathcal V(s)$ shows that (i) of \ref{Prop:control of q by (q1,q2)} is true. 
From (i) in \ref{Prop:control of q by (q1,q2)}, we deduce (ii) as follows:

From (i), there is $(m,\omega)\in \{0,1\}\times \{-1,1\}$ such that $q_m(s_1)=\omega\frac{A}{s_1^2}$ , and using 1 of \ref{Prop:control of q1,q2,q-,qe}, we see that 
\begin{equation}
\omega q'_m(s_1)\geq \left(1-\frac{m}{2}\right)\omega q_m(s_1)-\frac{C}{s^2_1}\geq \frac{(1-m/2)A-C}{s_1^2}.
\end{equation}
Taking $A$ large enough concludes the proof of Proposition \ref{Prop:control of q by (q1,q2)}.

\end{proof}

\subsection{Control of the solution in the bootstrap regime and proof of Theorem \ref{th: Main theorem}}

We prove Theorem \ref{th: Main theorem} using the previous results. We proceed in two parts:\\
\textbf{Part 1: Solution to the finite-dimensional problem}

Let $A$, and $T(=e^{-s_0})$ be chosen so that Proposition \ref{Prop:control of q by (q1,q2)} and Proposition \ref{Prop:control of q1,q2,q-,qe} are valid, We will find the parameters $(d_0,d_1)\in \mathcal{D}_T$ defined in \eqref{def: Dt} and advance by assuming that for all $(d_0,d_1)\in \mathcal{D}_T$, there exists $s_*(d_0,d_1) \geq -\log T$ such that $q_{d_0,d_1}(s) \in\mathcal V(s)$ for all $s \in [-\log T, s_*]$ and $q_{d_0,d_1}(s_*) \in \partial\mathcal V(s_*)$. From (i) of Proposition \ref{Prop:control of q by (q1,q2)}, we see that $(\tilde{q}_0(s_*), \tilde{q}_1(s_*)) \in \partial[-\frac{A}{s_*^2}, \frac{A}{s_*^2}]^2$ and the following function is well-defined:

\begin{equation}\label{definition of Phi}
\begin{aligned}
\Phi: \mathcal{D}_T&\rightarrow\partial[-1, 1]\\
(d_0,d_1)&\rightarrow \frac{s_*^2}{A}(\tilde{q}_0, \tilde{q}_1)_{d_0,d_1}(s_*).
\end{aligned}
\end{equation}
This function is continuous by (ii) of Proposition \ref{Prop:control of q by (q1,q2)}. If we manage to show that $\Phi$ is of degree 1 on the boundary, then we have a contradiction from the degree theory. We now focus on proving that.

Using the fact that $q(-\log T) = \psi_{d_0,d_1}$, we see that when $(d_0,d_1)$ is on the boundary of the quadrilateral $\mathcal{D}_T$, $(\tilde{q}_0, \tilde{q}_1)(-\log T) \in \partial[-A(\log T)^{-2}, A(\log T)^{-2}]^2$ and $q(-\log T) \in \mathcal V_A(-\log T)$ with strict inequalities for the other components. Applying the transverse crossing property of Proposition \ref{Prop:control of q by (q1,q2)}, we see that $q(s)$ leaves $\mathcal V(s)$ at $s = -\log T$, hence $s_*(d_0,d_1) = -\log T$. Using \eqref{definition of Phi}, we see that the restriction of $\Phi$ to the boundary is of degree 1. A contradiction then follows. Thus, there exists a value $(d_0,d_1) \in \mathcal{D}_T$ such that for all $s \geq -\log T$, $q_{d_0,d_1}(s) \in \mathcal V (s)$.\\ 
\textbf{Part 2: Proof of Theorem 1}

Consider the solution constructed in Part 1, such that $q(s)\in \mathcal V(s)$.
Then by Definition \ref{Def: shrinking set}, we see that
\[
\forall y \in \mathbb{R}, \quad \forall s \geq -\log T, \quad |q(y, s)| \leq \frac{CA^2 }{\sqrt{s}}.
\]

By definitions \eqref{self-similar-variables} \eqref{definition of varphi}, we see that
\[
\forall s \geq -\log T, \quad \forall |x| \geq \frac{ \varepsilon_0}{4}, \quad \left|W(y, s) - f\left(\frac{y}{\sqrt{s}}\right) \right|\leq \frac{CA^2}{\sqrt{s}} + \frac{C}{s}.
\]

By definition \eqref{self-similar-variables} of $W$, we see that $\forall t \in [0, T), \quad \forall |x| \geq \frac{\varepsilon_0}{4},$
\[
\left|(T - t)^{1/(p-1)} u(r, t) - f\left(\frac{r-r_{max}}{ \sqrt{(T-t)\log(T - t)}}\right)\right| \leq \frac{C(A)}{\sqrt{|\log(T - t)|}}.
\]

(i) If $r = r_{max}(=1)$, then we see from above that $|u(0, t)| \sim \kappa(T - t)^{-1/(p-1)}$ as $t \rightarrow T$. Hence, $u$ blows up at time $T$ at $r = r_{max}$.

It remains to prove that any $r\neq r_{max}(=1)$ is not a blow-up point.  Since we know from item (ii) in Definition \ref{Def: shrinking set} that if $r \leq \frac{3\vep_0}{4}$, and $0 \leq t \leq T$, $|u(r, t)| \leq \eta_0$, it follows that $r$ is not a blow-up point, provided $r \leq \frac{3\vep_0}{4}$.

Now, if $r \geq \frac{3\vep_0}{4}$, the following result from Giga and Kohn [13] allows us to conclude.

\begin{proposition}[Giga and Kohn]\label{threshold of non-blow-up}

For all $C_0 > 0$, there is $\eta_0 > 0$ such that if $v(\xi, \tau)$ solves
\[
|v_{\tau} - \Delta v| \leq C_0(1 + |v|^p),
\]
and satisfies
\[
|v(\xi, \tau)| \leq \eta_0(T - t)^{-1/(p-1)},
\]
for all $(\xi, \tau) \in B(a, R) \times [T - R^2, T)$ for some $a \in \mathbb{R}$ and $R > 0$, then $v$ does not blow up at $(a, T)$.
\end{proposition}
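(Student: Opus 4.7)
The plan is to pass to backward self-similar variables centered at the candidate blow-up point $(a,T)$. Define
\[
w(y,s) = (T-t)^{\frac{1}{p-1}} v\bigl(a + y\sqrt{T-t},\, t\bigr), \qquad s = -\log(T-t),
\]
so that the hypothesis $|v(\xi,\tau)| \leq \eta_0 (T-t)^{-1/(p-1)}$ becomes the uniform bound $\|w(\cdot,s)\|_{L^\infty(|y|\leq Re^{s/2})} \leq \eta_0$ for $s \geq -\log R^2$. A direct computation turns the differential inequality $|v_\tau - \Delta v| \leq C_0(1+|v|^p)$ into a perturbed semilinear equation of Giga--Kohn type: $w$ satisfies
\[
\bigl|\partial_s w - \Delta w + \tfrac{1}{2}y\cdot\nabla w + \tfrac{w}{p-1} - |w|^{p-1}w\bigr| \;\leq\; C_0\, e^{-\frac{ps}{p-1}}\bigl(1 + e^{-\frac{s}{p-1}}\,\eta_0^{\,p}\,e^{\frac{ps}{p-1}}\bigr),
\]
so the perturbation decays exponentially as $s\to\infty$ and can be absorbed in every subsequent estimate.

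Next I would invoke the Giga--Kohn Lyapunov functional
\[
E(w) = \int_{\mathbb{R}^d}\Bigl(\tfrac{1}{2}|\nabla w|^2 + \tfrac{1}{2(p-1)}w^2 - \tfrac{1}{p+1}|w|^{p+1}\Bigr)\rho\, dy, \qquad \rho(y) = \frac{e^{-|y|^2/4}}{(4\pi)^{d/2}},
\]
which, modulo an error coming from the perturbation and from the cutoff needed because $w$ is only defined on $|y|\leq Re^{s/2}$, is nonincreasing along the flow with dissipation $\int |\partial_s w|^2 \rho$. Combined with the uniform bound $|w|\leq\eta_0$, this forces $\int_0^\infty\!\!\int |\partial_s w|^2\rho\,dy\,ds < \infty$, so any weak limit $w_\infty$ along a subsequence $s_n\to\infty$ is a bounded stationary solution of
\[
\Delta w_\infty - \tfrac{1}{2}y\cdot\nabla w_\infty - \tfrac{w_\infty}{p-1} + |w_\infty|^{p-1}w_\infty = 0, \qquad \|w_\infty\|_{L^\infty}\leq \eta_0.
\]
By the Giga--Kohn classification, the only bounded stationary solutions are $0$ and $\pm\kappa$ with $\kappa=(p-1)^{-1/(p-1)}$. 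Choosing $\eta_0 < \kappa/2$ from the outset rules out $\pm\kappa$, so $w_\infty\equiv 0$.

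Finally, a standard parabolic bootstrap upgrades the subsequential convergence $w(\cdot,s_n)\to 0$ to $\|w(\cdot,s)\|_{L^\infty(|y|\leq 1)}\to 0$ as $s\to\infty$, which is exactly the statement that $(T-t)^{1/(p-1)}v(\xi,t)\to 0$ uniformly on a shrinking parabolic neighborhood of $(a,T)$; by local parabolic regularity applied to $v_\tau - \Delta v = O(1+|v|^p)$, this precludes blow-up at $(a,T)$. The main technical obstacle is the localization: $w$ is only defined on the moving ball $|y|\leq Re^{s/2}$, so the Lyapunov computation must be carried out against a smooth cutoff, producing boundary error terms that have to be controlled by the a priori $L^\infty$ bound on $w$, together with the exponential decay of $\rho$ on the boundary $|y|\sim Re^{s/2}$. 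All remaining pieces—classification of stationary solutions, monotonicity of $E$, and the bootstrap—are the classical ingredients from \cite{GKcpam85}.
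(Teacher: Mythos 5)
The paper does not prove this proposition; it simply cites Theorem~2.1 of Giga and Kohn \cite{GKcpam85}. So there is no internal argument to compare against, and your proposal is an attempt to supply a proof the authors chose to omit. Unfortunately, the central calculation in your sketch is wrong, and the error is not cosmetic.

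After passing to self-similar variables, the transformed function $w$ satisfies
\[
\partial_s w - \Delta w + \tfrac{1}{2}y\cdot\nabla w + \tfrac{w}{p-1}
= e^{-\frac{p}{p-1}s}\bigl(v_\tau - \Delta_\xi v\bigr),
\]
and since $|v|^p = e^{\frac{p}{p-1}s}|w|^p$, the right side is bounded by $C_0\,e^{-\frac{p}{p-1}s} + C_0|w|^p$: the scaling prefactor $e^{-\frac{p}{p-1}s}$ and the growth of $|v|^p$ cancel \emph{exactly}. The extra factor $e^{-\frac{s}{p-1}}$ in your displayed bound is spurious. Thus the residual is small, of size $O(\eta_0^p)$, but it does \emph{not} decay in $s$. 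This is fatal for the Lyapunov step: the best you get is
\[
\int_\tau^\sigma\!\!\int |\partial_s w|^2\rho\,dy\,ds \;\le\; E(\tau) - E(\sigma) + C\eta_0^p(\sigma-\tau),
\]
whose right side grows linearly, so $\int_0^\infty\!\!\int|\partial_s w|^2\rho$ is not finite, subsequential limits are not forced to be stationary, and the Giga--Kohn classification cannot be invoked. A secondary but related point: the hypothesis is a bare differential inequality on $v$, so there is no $|w|^{p-1}w$ term in the $w$-equation to begin with; you inserted it by hand, and that is precisely the piece responsible for the nondecaying $O(\eta_0^p)$ residual that you then declared negligible.

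The argument can be repaired by exploiting \emph{smallness} rather than \emph{decay} of the residual, and this is closer in spirit to the known proofs. Bound $|w|^p \le \eta_0^{p-1}|w|$ and regard $w$ as a solution of $\partial_s w = \bigl(\mathcal{L}_0 - \tfrac{1}{p-1}\bigr)w + g$ with $\mathcal{L}_0 = \Delta - \tfrac{1}{2}y\cdot\nabla$ and $|g|\le C_0\eta_0^{p-1}|w| + C_0 e^{-\frac{p}{p-1}s}$. The operator $\mathcal{L}_0 - \tfrac{1}{p-1}$ is strictly dissipative on $L^2(\rho\,dy)$ with rate $\tfrac{1}{p-1}$; if $C_0\eta_0^{p-1}$ is small compared to that rate, a Duhamel/Gronwall estimate shows $w$ decays, which improves the scale-invariant bound on $v$ and can then be bootstrapped to boundedness via local parabolic regularity. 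This bypasses the Lyapunov functional entirely, is compatible with the fact that $v$ solves only a differential inequality, and makes transparent where the smallness of $\eta_0$ enters. Your localization worry (cutoff at $|y|\sim Re^{s/2}$) is real but secondary, and handled as you indicate by the Gaussian weight.
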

\textbf{Proof.} See Theorem 2.1 page 850 in \cite{GKcpam85}. $\square$

Since $r\geq\frac{\vep_0}{4}$, the estimate
\[
\left|(T - t)^{1/(p-1)} u(r, t) - f\left(\frac{r-r_{max}}{ \sqrt{(T-t)\log(T - t)}}\right)\right| \leq \frac{C(A)}{\sqrt{|\log(T - t)|}}.
\]
together with Proposition \ref{threshold of non-blow-up} concludes that $r$ is not a blow-up point.

\section{Reduction to a finite-dimensional problem}\label{Section-reduction-to-finite-dimension}
%Here we prove Lemma?? 
Since the definition of the shrinking set $\mathcal{S}$, given by Definition \ref{Def: shrinking set}, shows two different types of estimates, in the blow-up region and regular region, accordingly, we need two different approaches to handle those estimates:
\begin{itemize}
\item In the blow-up region, we work in similarity variables \eqref{self-similar-variables}, in particular, we crucially use the projection of equation \eqref{q's equation} with respect to the decomposition given in \eqref{decomposition of q}.

\item In the regular region, we directly work in the variables $u(x, t)$, using standard parabolic estimates.
\end{itemize}

%\subsection{Details on the dynamics of the linearized solution}
\subsection{Estimates in the blow-up region}\label{Proof-a-priori-estimates}

\textbf{Proof of Proposition \ref{Prop:control of q1,q2,q-,qe}}\\
%\textbf{section A priori estimates}\\
In this section, we prove Proposition \ref{Prop:control of q1,q2,q-,qe}. More precisely, we project the linearized equation \eqref{q's equation} on the Hermite polynomials to get the equations
satisfied by the different coordinates of the decomposition \eqref{decomposition of q}. \\

In the following, we will find the main contribution in the projections $P_i$ (for $0\leq i\leq 2$) and $P_-$
of the different terms appearing in equation \eqref{q's equation}. More precisely, the proof will be
carried out in two parts;
\begin{itemize}
\item  In the first subsection, we write equations satisfied by $q_j$, for $0\leq j\leq 2$, and $q_-$. Then, we prove (1) and (2) of Proposition 4.5.
\item  In the second subsection, we first derive from equation \eqref{q's equation} the equation satisfied by $q_e$ and prove the last identity in (3) of Proposition \ref{Prop:control of q1,q2,q-,qe}.
\end{itemize}

\textbf{Part 1: Proof of items (1) and (2) from Proposition \ref{Prop:control of q1,q2,q-,qe}}

\medskip

\noindent\textbf{First term $\partial_s q$}\\
Let $P_i$ and $P_-$ defined as in \eqref{eq: Def of projections P_m} and \eqref{eq: Def of projection P_-}, then the following holds:
\begin{equation}
\begin{aligned}
P_i (\del_s q)&= \del_s q_i \mbox{ with } i \in \{0,1,2\}, \\
P_-(\del_s q)&= \del_s q_-.
\end{aligned}
\end{equation}
%	\end{lemma}
\noindent\textbf{Second term $\mathcal{L} q$}\\
By the definition of $h_i$ given by \eqref{eq:definition of hm}, we easily obtain the projection of $\mathcal{L} q$ as follows
\begin{lemma}\label{second term}
Let $P_i$ and $P_-$ defined as in \eqref{eq: Def of projections P_m} and \eqref{eq: Def of projection P_-}, then the following holds:
\begin{equation}
\begin{aligned}
P_i (\mathcal{L} q)&= \left (1-\frac{i}{2}\right ) q_i,\mbox{ for }i=0,1,2. \\
%P_1 (\mathcal{L} q)&=  \frac{1}{2} q_1,\\
%P_2(\mathcal{L} q)&= 0,\\
P_-(\mathcal{L} q)&= \mathcal{L} q_-.
\end{aligned}
\end{equation}
\end{lemma}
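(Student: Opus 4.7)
The statement is a direct consequence of the spectral properties of $\mathcal{L}$ already recalled just before its formulation: the Hermite polynomials $h_m$ form a complete orthogonal family of eigenfunctions of $\mathcal{L}$ in $L^2(\mathbb{R},d\mu)$ with eigenvalues $\lambda_m = 1-\frac{m}{2}$, i.e.\ $\mathcal{L} h_m = (1-\tfrac{m}{2}) h_m$. The plan is simply to combine this eigenvalue relation with the self-adjointness of $\mathcal{L}$ on $\mathcal{D}(\mathcal{L})\subset L^2(\mathbb{R},d\mu)$ and the definitions \eqref{eq: Def of projections P_m}--\eqref{eq: Def of projection P_-} of the projections.

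For the first identity, I would plug the eigenvalue relation into the definition of $P_i$ and move $\mathcal{L}$ onto $h_i$ using self-adjointness:
\[
P_i(\mathcal{L} q) = \frac{\int_{\mathbb{R}} (\mathcal{L} q)\, h_i\, d\mu}{\left(\int_{\mathbb{R}} h_i^{\,2}\, d\mu\right)^{1/2}}
= \frac{\int_{\mathbb{R}} q\,(\mathcal{L} h_i)\, d\mu}{\left(\int_{\mathbb{R}} h_i^{\,2}\, d\mu\right)^{1/2}}
= \left(1-\frac{i}{2}\right) q_i,\qquad i=0,1,2,
\]
which yields the top line of the lemma verbatim.

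For the identity on $P_-$, I would apply $\mathcal{L}$ termwise to the decomposition \eqref{decomposition of q}, using $\mathcal{L} h_i=(1-i/2)h_i$:
\[
\mathcal{L} q = \sum_{i=0}^{2} q_i\, \mathcal{L} h_i + \mathcal{L} q_{-}
= \sum_{i=0}^{2}\left(1-\frac{i}{2}\right) q_i h_i + \mathcal{L} q_{-}.
\]
On the other hand, decomposing $\mathcal{L} q$ directly with respect to the projections gives $\mathcal{L} q = \sum_{i=0}^{2} P_i(\mathcal{L} q)\, h_i + P_{-}(\mathcal{L} q)$. Comparing with the previous identity and invoking the first part of the lemma already proved, one reads off $P_{-}(\mathcal{L} q) = \mathcal{L} q_{-}$. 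The consistency condition, namely that $\mathcal{L} q_{-}$ is indeed orthogonal in $L^2(d\mu)$ to $h_0,h_1,h_2$, follows once more from self-adjointness together with the built-in orthogonality $\int_{\mathbb{R}} q_{-}\, h_i\, d\mu=0$ for $i=0,1,2$.

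The only mild point is to justify the termwise differentiation and the integration by parts that underlie the self-adjointness. Under the polynomial growth and regularity enforced by the shrinking set $\mathcal{V}(s)$ (see Definition \ref{Def: shrinking set}) and the Gaussian weight $d\mu$, all boundary terms at infinity vanish and the series defining $q_{-}$ can be differentiated under the integral sign; these are standard Ornstein--Uhlenbeck-type estimates, so I do not anticipate any genuine obstacle in making this step rigorous.
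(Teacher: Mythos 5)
Your proposal is correct and is exactly the argument the paper is alluding to when it says the identities follow ``easily'' from the definition of $h_m$: since the $h_m$ are eigenfunctions of $\mathcal{L}$ with eigenvalue $1-\tfrac{m}{2}$ and $\mathcal{L}$ is self-adjoint on $L^2(\mathbb{R},d\mu)$, moving $\mathcal{L}$ onto $h_i$ in the numerator of $P_i$ immediately gives the first line, and linearity plus the decomposition \eqref{decomposition of q} gives the second. The paper gives no further detail, so your write-up simply makes explicit the expected one-line computation; no discrepancy in approach.
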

\noindent\textbf{Third term $Vq$}\\

\begin{lemma}\label{Third term}
For all $A>0$, there exists an \(s_0\geq 0\) such that for all \(s\geq s_{0}\), if \(q(s)\in \mathcal V(s)\), the following estimations holds: 
\begin{equation}
\begin{aligned}
P_i(Vq)&\leq  ACs^{-2}   \quad i= 0\mbox{ or }1,\\
\left|P_2(Vq)+ {2}s^{-1}q_2\right|&\leq   2As^{-3},\\
P_-(Vq)&\leq   CA^2s^{-3}log(s)(1+|y|^3).
\end{aligned}
\end{equation}
\end{lemma}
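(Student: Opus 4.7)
The strategy is to exploit a sharp Taylor expansion of $V=p\varphi^{p-1}-p/(p-1)$ on the inner region $|y|\leq K\sqrt s$, and then to compute the Hermite projections of $Vq$ using the bootstrap estimates of $\mathcal V(s)$. Combining the identity $pf^{p-1}(z)-p/(p-1)=-pbz^2/[(p-1)(p-1+bz^2)]=-z^2/4+O(z^4)$ (using $b=(p-1)^2/(4p)$) with the $1/(2s)$ correction coming from the $\kappa/(2ps)$ summand in $\varphi$ (a direct expansion of $[1+\kappa/(2ps f)]^{p-1}$), one obtains
\[
V(y,s)=-\frac{h_2(y)}{4s}+\widetilde V(y,s),
\]
where $h_2(y)=y^2-2$ and $|\widetilde V(y,s)|\leq C(1+y^4)/s^2$ on the inner region, with $\|V\|_{L^\infty}$ bounded globally. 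The recombination $-y^2/(4s)+1/(2s)=-h_2(y)/(4s)$ is the crucial rearrangement that produces a purely ``second-mode'' leading potential.

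Next, I would insert the decomposition \eqref{decomposition of q} into $Vq=-\frac{h_2}{4s}q+\widetilde V q$ and exploit the Hermite multiplication rules $h_2 h_0=h_2$, $h_2 h_1=h_3+4h_1$, and $h_2^2=h_4+8h_2+8$, which follow from the three-term recurrence. Under these rules the projections $P_0$ and $P_1$ collect no diagonal contribution from the leading piece (multiplication by $h_2$ shifts degrees by $\pm 2$), so the only surviving terms are off-diagonal, and by the bootstrap estimates on $q_m, q_-, q_e$ together with Proposition \ref{Prop: Priori estimate} they are each bounded by $CA/s^2$. The remainder $\widetilde V q$ is handled using $\|q\|_{L^\infty}\leq CA^2/\sqrt s$ combined with the $1/s^2$ size of $\widetilde V$ on the relevant region, producing contributions absorbed into $CA/s^2$. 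For $P_2$, the only diagonal term $-\frac{h_2}{4s}q_2 h_2$ extracts via $h_2^2=h_4+8h_2+8$ an $h_2$-coefficient equal to $-8q_2/(4s)=-2q_2/s$, which is precisely the announced leading term; all remaining contributions are $O(As^{-3})$ by the bootstrap.

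For the pointwise bound on $P_-(Vq)$, I would split $Vq=V q_-+V\sum_{m\le 2}q_m h_m$: the first piece is bounded directly by $|V|\cdot |q_-|$ using the shrinking-set estimate $|q_-|\leq As^{-2}(1+|y|^3)$ and the pointwise size of $V$; the second is expanded mode by mode and projected onto the stable eigenspace, with the dominant contribution coming from $V q_2 h_2$, which inherits the logarithmic factor through $|q_2|\leq A^2 s^{-2}\log s$, yielding the announced bound $CA^2 s^{-3}\log(s)(1+|y|^3)$.

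The main technical obstacle is obtaining the exact coefficient $-2$ in the $P_2$ estimate. This requires identifying the $+1/(2s)$ constant part of $V$ that combines with $-y^2/(4s)$ into the ``clean'' second-Hermite potential $-h_2/(4s)$, and then tracking the Hermite-polynomial normalizations $\|h_m\|_{L^2(d\mu)}^2=2^m m!$ through the reduction $h_2^2=h_4+8h_2+8$; a parallel verification that $\widetilde V q$ contributes no $1/s$-order term at the $h_2$ level follows from the polynomial-in-$y^2/s$ structure of $\widetilde V$, whose degree-four pieces project into $h_4$ (plus $h_2,h_0$ corrections that are already $O(1/s^2)$).
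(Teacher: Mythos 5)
Your proposal is essentially the paper's proof, organized more explicitly through Hermite algebra. The paper likewise Taylor-expands $V$ for $|y|\le\sqrt s$, obtaining $V=\tfrac1{2s}-\tfrac{y^2}{4s}+O(y^4/s^2)$ (the two leading coefficients follow from $b=\tfrac{(p-1)^2}{4p}$ and $a=\tfrac{\kappa}{2p}$), and then projects onto $h_0,h_1,h_2,P_-$ using orthogonality and the bootstrap bounds. Your recombination $\tfrac1{2s}-\tfrac{y^2}{4s}=-\tfrac{h_2(y)}{4s}$ is a clean way to see the exact $-2q_2/s$ coefficient (via $h_2^2=h_4+8h_2+8$ and $\|h_2\|^2_{L^2(d\mu)}=8$), which the paper leaves more implicit. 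So I would not call this a genuinely different route.

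One small factual slip to fix: you write that ``$P_0$ and $P_1$ collect no diagonal contribution from the leading piece (multiplication by $h_2$ shifts degrees by $\pm 2$)'', but your own listed rule $h_2 h_1=h_3+4h_1$ shows a surviving $h_1$ component, so the diagonal term $-q_1/s$ does appear in $P_1$. (The general rule is $h_2h_m=h_{m+2}+4m\,h_m+4m(m-1)h_{m-2}$, so only $m=0$ escapes a diagonal term.) This does not endanger the estimate, since $|q_1|/s\le A/s^3\ll A/s^2$, but the statement as written is wrong. Also make sure to handle the region $|y|>\sqrt s$ explicitly (where the Taylor expansion is not valid): there the Gaussian weight in $d\mu$ produces an $O(e^{-s/8})$ tail using only $\|V\|_{L^\infty}\le C$, which is what the paper does. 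Finally, for the $P_-$ bound, note that bounding $|Vq_-|$ pointwise produces $(1+|y|^5)$ growth (from $|V|\lesssim (1+y^2)/s$ and $|q_-|\lesssim (1+|y|^3)/s^2$), so you must use $|y|\le 2K\sqrt s$ to trade excess powers of $y$ for powers of $\sqrt s$; the same bookkeeping is required for the $q_2h_2$ contribution, whose $P_-$-projection lands on $h_4\sim y^4$. These details deserve a sentence in a full write-up, but they do not change the structure of the argument.
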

\begin{proof}
Let us recall that \(V=p\varphi^{p-1}-\frac{p}{p-1}\), then using Taylor expansion for  $\{|y|\leq \sqrt{s}\}$, we obtain:
\begin{equation}\label{eq: expansion V}
\begin{aligned}
V&=p\left ((p-1+b\frac{y^2}{s})^{-1/(p-1)}+\frac{\kappa}{2ps}\right )^{p-1}-\frac{p}{p-1},\\
&= \frac{1}{2s}-\frac{bpy^2}{s(p-1)^2}+O(\frac{y^4}{s^2}).\\
\end{aligned}
\end{equation}
Using the fact that $q \in \mathcal{V}(s)$, $q_0$ and $q_1$ are then controlled by $s^{-2}$. Therefore,
\begin{equation}
\begin{array}{ll}
|P_{0}(Vq)|= |\int_{\Rbb}Vq h_0d\mu|,&\\
\leq  |\int_{|y|\leq \sqrt{s}} C\frac{y^2}{s}(\sum_{i=0}^2 q_ih_i+ q_-) h_0  d\mu|
+  |\int_{|y|> \sqrt{s}} C\frac{y^2}{s}(\sum_{i=0}^2 q_ih_i+ q_-) h_0  d\mu|,&\\ 
\leq C|\int_{|y|\leq \sqrt{s}} (\sum_{i=0}^2 q_ih_i+ q_-) h_0 d\mu|
+|\int_{|y|> \sqrt{s}} C\frac{y^2}{s}(\sum_{i=0}^2 q_ih_i+ q_-) h_0  \frac{e^{-\frac{y^2}{4}}}{(4\pi)^{1/2}}dy|,&\\
\leq C|q_0| +|\int_{|y|> \sqrt{s}} C\frac{y^2}{s}(\sum_{i=0}^2 q_ih_i+ q_-) h_0  \frac{e^{-\frac{y^2}{4}}}{(4\pi)^{1/2}}dy|.&\\
\end{array}
\end{equation}
Notice that $q(s)$ is in $\mathcal{V}(s)$, then by Definition \ref{Def: shrinking set} and Proposition \ref{Prop: Priori estimate}, we obtain that:
\begin{equation}
|P_0(Vq)|\leq \frac{AC}{s^2}+Ce^{-\frac{s}{8}}\leq \frac{AC}{s^2}.
\end{equation}
This is exactly the desired result for $P_0(Vq)$. The proof for $P_1(Vq)\leq e^{-\frac{Ks^2}{2}}$ is parallel to above; hence, we omit it. Using \eqref{eq: expansion V} and arguing as above, we obtain: 
\begin{equation}
\begin{aligned}
|P_{2}(Vq)+2s^{-1}q_2|
&\leq  |\int_{\Rbb}-\frac{bp}{(p-1)^2}(s^{-1})(y^2)(\sum_{i=0}^2 q_ih_i+ q_-) h_2 d\mu + 2s^{-1}q_2|\\
&\leq  ACs^{-3}.
\end{aligned}
\end{equation}
The above implies that 
\begin{equation}
P_-(Vq)=\left|Vq-\sum_{i=0}^2 P_{i}(Vq)\right|\leq CA^2s^{-3}\log(s)(1+|y|^3).
\end{equation}
\end{proof}

\noindent\textbf{Fourth term $R(y,s)$}

\begin{lemma}[Estimates for term $R$]\label{Lemma:esti:R}
For $i \leq 1$ 
\begin{equation}\label{control of $R$ on H+}
|P_i(R)|\leq Cs^{-2},
\end{equation}

\begin{equation}\label{control of $R$ on H2}
|P_2(R)|\leq Cs^{-3},
\end{equation}
\end{lemma}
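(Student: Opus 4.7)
The plan is to substitute the ansatz $\varphi(y,s)=f(y/\sqrt{s})+\kappa/(2ps)$ from \eqref{definition of varphi} into the defining formula for $R$ in \eqref{definition of B,R,F,N}, and carefully track orders in $1/s$. Setting $z=y/\sqrt{s}$ and computing
\[
\del_y\varphi=\tfrac{1}{\sqrt{s}}f'(z),\quad \del_y^2\varphi=\tfrac{1}{s}f''(z),\quad \del_s\varphi=-\tfrac{z}{2s}f'(z)-\tfrac{\kappa}{2ps^2},
\]
together with the Taylor expansion $\varphi^p=f(z)^p+\tfrac{\kappa}{2s}f(z)^{p-1}+O(1/s^2)$ uniformly for $|z|$ bounded, I would regroup by powers of $1/s$ to write $R = R_0(z) + \Phi(z)/s + \rho(y,s)/s^2$, where $R_0(z)=-\tfrac{z}{2}f'(z)-\tfrac{f(z)}{p-1}+f(z)^p$ vanishes identically because $f$ solves the stationary profile ODE, and
\[
\Phi(z)=f''(z)+\tfrac{z}{2}f'(z)-\tfrac{\kappa}{2p(p-1)}+\tfrac{\kappa}{2}f(z)^{p-1}.
\]

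Since $f$ is even in $z$, so is $\Phi$. A direct computation with $f(z)=(p-1+bz^2)^{-1/(p-1)}$ and $b=(p-1)^2/(4p)$ gives $f''(0)=-\kappa/(2p)$ and $f^{(4)}(0)=3\kappa/(4p)$, from which one verifies $\Phi(0)=0$ and $\Phi''(0)=0$. These double cancellations at the origin, which are precisely the reason for the specific value of $b$ and for the corrector $\kappa/(2ps)$ in the ansatz, imply $\Phi(z)=O(z^4)$ as $z\to 0$, hence $|\Phi(y/\sqrt s)|\leq Cy^4/s^2$ on $|y|\leq\sqrt{s}$.

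Given this structure, I would split each projection $P_i(R)=\|h_i\|_{L^2(d\mu)}^{-1}\int R\, h_i\, d\mu$ into an inner region $\{|y|\leq\sqrt s\}$ and an outer region $\{|y|>\sqrt s\}$. On the outer region the Gaussian weight $e^{-y^2/4}$ beats any polynomial in $y$, producing an $O(e^{-s/8})$ contribution. On the inner region, the $\Phi/s$ part contributes at most $C/s^3$ to any $P_i(R)$ thanks to the $O(z^4)$ behavior of $\Phi$. The remainder $\rho$ is bounded and even in $y$: its leading constant part $\kappa/(2ps^2)$ yields an $O(1/s^2)$ contribution to $P_0$, but is orthogonal to $h_1$ by parity and to $h_2$ since $\int h_2\, d\mu=0$, leaving only an $O(1/s^3)$ contribution to $P_2$ coming from the $z$-dependent pieces of $\rho$. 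Combining, this yields $|P_i(R)|\leq C/s^2$ for $i=0,1$ and $|P_2(R)|\leq C/s^3$, as claimed.

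The main obstacle is the verification of the double cancellation $\Phi(0)=\Phi''(0)=0$: it depends on the specific values of $b$ and of the constant $\kappa/(2p)$ in the ansatz, and without it the $P_2$-bound would only be $O(1/s^2)$, which would be insufficient to close the bootstrap on $q_2$ in Proposition \ref{Prop:control of q1,q2,q-,qe}. The algebra is classical and parallels the corresponding step in \cite{MZjfa08,MZdm97}, to which I would defer for the explicit manipulations.
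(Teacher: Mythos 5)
Your proposal is correct and takes essentially the same route as the paper: both substitute the ansatz \eqref{definition of varphi}, Taylor-expand $R$ in powers of $1/s$ (with $z=y/\sqrt{s}$), and observe that the stationary ODE for $f$ kills the $O(1)$ part while the specific values $a=\kappa/(2p)$ and $b=(p-1)^2/(4p)$ force the $z^0$ and $z^2$ coefficients of the $O(1/s)$ term to vanish, yielding $P_0(R)=O(s^{-2})$, $P_1(R)=0$ by parity, and $P_2(R)=O(s^{-3})$. Your packaging of the double cancellation as $\Phi(0)=\Phi''(0)=0$ is exactly what the paper checks by exhibiting the two coefficients $(a-\frac{2b\kappa}{(p-1)^2})$ and $(-\frac{abp}{(p-1)^2}+\frac{b\kappa}{(p-1)^2}(\frac{6bp}{(p-1)^2}-1))$ and verifying that both equal zero.
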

\noindent and we have also:
\begin{equation}
|P_-(R)|\leq Cs^{-2}(1+|y|^{3})
\end{equation}
\begin{proof}
To give the estimates on $R$, we first compute each term in $R$ for $|y|<\sqrt{s}$ with Taylor expansion. To have a better vision of this, we remind the readers that
\[R(y,s)= \del^2_y \varphi-\frac{1}{2}y\del_y \varphi-\frac{1}{p-1}\varphi+\varphi^p-\del_s \varphi,\]
with $\varphi(y,s)=\left [(p-1+b\frac{y^2}{s})^{-\frac{1}{p-1}}+\frac{a}{s}\right ]$.

%\textcolor{red}{
%To have a better vision how orders cancel in your expansion, you must write
%\[\varphi(y,s)=\left [(p-1+b\frac{y^2}{s})^{-\frac{1}{p-1}}+\frac{a}{s}\right ],\]
%it is better to keep the constant $b$ in your expansions, then in the end you will replace by %$b=\frac{p-1}{4 p}$. \\
%Then you can start by writing 
%\[\varphi_{y}(y,s) =-\frac{2b}{(p-1)s}y\left (p-1+b\frac{y^2}{s}\right )^{-\frac{p}{p-1}}\]
%\[\varphi_{yy}(y,s) =-\frac{2b}{(p-1)s}\left (p-1+b\frac{y^2}{s}\right )^{-\frac{p}{p-1}}+ \frac{4b^2}{(p-1)^2s^2}y^2\left (p-1+b\frac{y^2}{s}\right )^{-\frac{2p-1}{p-1}}\]
%}

\medskip

We note that for$ |y|<\sqrt{s}$, $ f $ is bounded. By Taylor expansion, we obtain the following 
\begin{equation}\label{eq: R taylor}
\begin{aligned}
R(y,s)&=\left(a-\frac{wb\kappa}{(p-1)^2}\right)\frac{1}{s}+O\left(\frac{1}{s^2}\right)\\
&+\left(-\frac{abp}{(p-1)^{2}}+\frac{b\kappa}{(p-1)^2}\left(\frac{6bp}{(p-1)^2}-1\right)\right)\frac{y^2}{s^2}+O \left (\frac{y^6}{s^3} \right ).\\
\end{aligned}
\end{equation}
From the above Taylor expansion, one can easily see that:
\begin{equation}
\begin{aligned}\label{eq: P0R}
P_0(R)&=\left(a-\frac{2b\kappa}{(p-1)^2}\right)\frac{1}{s}+O\left (\frac{1}{s^2}\right ),\\
\end{aligned}
\end{equation}
and 
\begin{equation}\label{eq:P2R}
\begin{aligned}
P_2(R)= & 2\left(-\frac{abp}{(p-1)^{2}}+\frac{b\kappa}{(p-1)^2}\left(\frac{6bp}{(p-1)^2}-1\right)\right)\frac{1}{s^2}+O\left (\frac{1}{s^3}\right ).
\end{aligned}
\end{equation}
Together with our choice of $a$, $b$: 
\begin{equation}
%\left\{
%\begin{aligned}
a=\frac{(p-1)^{-\frac{1}{p-1}}}{2p},\;\;
b=\frac{(p-1)^2}{4p},\\
%\end{aligned}
%\right.
\end{equation}
we therefore obtain  $P_0(R)=O\left(\frac{1}{s^2}\right)$ and $P_2(R)=O\left(\frac{1}{s^3}\right)$.
%\begin{equation}
%	\begin{aligned}
%		P_0(R)&=O\left(\frac{1}{s^2}\right).\\
%	\end{aligned}
%\end{equation}
%\begin{equation}
%	\begin{aligned}
%		P_2(R)&=O\left(\frac{1}{s^3}\right).\\
%	\end{aligned}
%\end{equation}
The estimation for $P_1(R)$ can be argued as in Corollary 5.13 and Lemma 5.18 in \cite{MZjfa08}; hence, we omit here.

Using \eqref{eq: Def of projection P_-},\eqref{eq: R taylor},\eqref{eq: P0R}, and \eqref{eq:P2R}, we obtain the following:
$$
|P_{-}(R)|\leq \left|R-\sum_{i=0}^{2}P_i(R)\right|\leq Cs^{-2}(1+|y|^3).
$$
\end{proof}

\noindent\textbf{Fifth term $B$}:

For the quadratic term $B$, we first remind the readers of the following Lemma:
\begin{lemma}
For all $A > 0$, there exists $s_{0}\geq 0$ such that for all $\tau \geq s_{0}$, if $q(\tau) \in \mathcal{V}(\tau)$, then
\begin{equation}\label{eq:B inner estimation}
|\chi_c(y, \tau) B(q(y, \tau))| \leq C |q|^2, 
\end{equation}
and
\begin{equation}\label{eq:B outer estimation}
|B(q)| \leq C |q|^{\bar{p}} ,
\end{equation} 
where $\bar{p} = \min(p, 2)$.
\end{lemma}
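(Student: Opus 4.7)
I set $g(u)=|u|^{p-1}u$, so that
\[
B(q)=g(\varphi+q)-g(\varphi)-g'(\varphi)q
\]
is the second-order Taylor remainder of the nonlinearity around $\varphi$. The two estimates correspond to two distinct regimes for this remainder: in the inner region, $g$ is smooth near the point $\varphi$ and Taylor's theorem applies directly; in the outer region, $g''$ may be singular (for $1<p<2$) and one has to resort to an elementary two-regime inequality which is standard in this context.

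\textbf{Inner estimate \eqref{eq:B inner estimation}.} On the support of $\chi_c$ one has $|y|\le 4K\sqrt{s}$, hence $|y/\sqrt{s}|\le 4K$. From \eqref{definition of varphi}--\eqref{definition of f}, there exist constants $\alpha(K),M(K)>0$ such that $\alpha(K)\le\varphi(y,s)\le M(K)$ on this support for $s\ge s_0(K)$. Since $q(\tau)\in\mathcal V(\tau)$, Proposition \ref{Prop: Priori estimate} yields $\|q\|_{L^\infty}\le C(K)A^2/\sqrt{s}$, which is at most $\alpha(K)/2$ provided $s_0(K,A)$ is chosen large enough. Consequently $\varphi+\theta q\ge \alpha(K)/2>0$ for all $\theta\in[0,1]$, so the integral form of Taylor's theorem
\[
B(q)=q^2\int_0^1(1-\theta)\,g''(\varphi+\theta q)\,d\theta,
\]
together with the uniform bound on $|g''|$ over the compact interval $[\alpha(K)/2,\,2M(K)]$, yields $|B(q)|\le C(K)|q|^2$; multiplying by $\chi_c$ proves \eqref{eq:B inner estimation}.

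\textbf{Outer estimate \eqref{eq:B outer estimation} and the main obstacle.} Without the cut-off $\chi_c$, $\varphi$ may be arbitrarily small while $|q|$ is comparable to or larger than $\varphi$, so for $1<p<2$ the singularity of $g''$ at $0$ prevents a direct Taylor argument; this is the only genuine difficulty. I would invoke the classical pointwise inequality
\[
\bigl||a+b|^{p-1}(a+b)-|a|^{p-1}a-p|a|^{p-1}b\bigr|\le C(p)
\begin{cases}
|a|^{p-2}b^2+|b|^p, & p\ge 2,\\
|b|^p, & 1<p<2,
\end{cases}
\]
valid for all $a,b\in\mathbb{R}$, proved by splitting into $|b|\le|a|/2$ (Taylor to second order, away from the singularity, noting that for $1<p<2$ one then has $|a|^{p-2}b^2\le|b|^p$) and $|b|\ge|a|/2$ (direct bound using $|a+b|^p+|a|^p+|a|^{p-1}|b|\le C|b|^p$ in that regime). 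Applying it with $a=\varphi$, $b=q$: Proposition \ref{Prop: Priori estimate} and the outer bound $\|q_e\|_{L^\infty}\le As^{-1/2}$ in Definition \ref{Def: shrinking set} together give $\|q\|_{L^\infty}\le 1$ for $s\ge s_0(A)$; combined with the global bound $\varphi\le M$, the case $p\ge 2$ becomes $|B|\le C(q^2+|q|^p)\le C|q|^2$ since $|q|\le 1$, and the case $1<p<2$ is already $|B|\le C|q|^p$. In both cases $|B|\le C|q|^{\bar p}$ with $\bar p=\min(p,2)$, which is \eqref{eq:B outer estimation}.
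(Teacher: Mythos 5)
Your proof is correct. The paper itself gives no argument here, only a citation to Lemma~3.6 of Merle--Zaag \cite{MZdm97}, and the two-part proof you supply is exactly the standard argument one finds there: in the inner region $\varphi$ is bounded below by a $K$-dependent positive constant while $\|q\|_{L^\infty}\lesssim A^2/\sqrt{s}$ can be made small, so Taylor's theorem with integral remainder applies with $g''$ bounded on a compact interval bounded away from the origin; in the outer region the elementary two-regime inequality (splitting into $|q|\le\varphi/2$ and $|q|\ge\varphi/2$) handles the $1<p<2$ singularity of $g''$ at zero and gives the $|q|^{\bar p}$ bound. One cosmetic remark: invoking the $q_e$ bound from Definition~\ref{Def: shrinking set} is redundant, since item~1 of Proposition~\ref{Prop: Priori estimate} already gives $\|q\|_{L^\infty(\mathbb R)}\le C(K)A^2/\sqrt{s}\le 1$ for $s\ge s_0(K,A)$; this does not affect correctness.
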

\begin{proof}
This Lemma was argued in Lemma 3.6 of \cite{MZdm97}; interested readers are invited to read the proof in \cite{MZdm97}.
\end{proof}
Then, we are able to claim the following lemma:
\begin{lemma}\label{fifthterm}
There exists $s_0\geq 0$ such that if $q(s)\in \mathcal{V}(s)$ for $s>s_0$, then $B$ verifies:
\begin{equation}
\begin{aligned}
P_i(B)&\leq CA^2s^{-3}, \quad i\in \{0,1,2\}\\
%B_2 &\leq Cs^{-3},\\
P_{-}(B)& \leq CAs^{-2} (1+|y|^3).\\
\end{aligned}
\end{equation}
\end{lemma}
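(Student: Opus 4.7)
The plan is to decompose the analysis of $B$ according to the two regions where the two pointwise bounds from the preceding lemma apply: the inner region $|y|\leq 2K\sqrt{s}$ where $\chi_c\equiv 1$ and hence $|B|\leq C|q|^2$, and the outer region where only the weaker bound $|B|\leq C|q|^{\bar p}$ with $\bar p=\min(p,2)$ is available. The two a priori controls on $q$ supplied by Proposition \ref{Prop: Priori estimate} (the polynomial bound $|q(y,s)|\leq CA(\log s)(1+|y|^3)s^{-2}$ and the uniform bound $\|q\|_{L^\infty}\leq CA^2 s^{-1/2}$) then feed directly into these two regions, and the Gaussian weight $d\mu$ handles the tails.

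For the projections with $i\in\{0,1,2\}$, I would write
$$P_i(B)=C_i\int_{\mathbb{R}}B\,h_i\,d\mu=C_i\Big(\int_{|y|\leq 2K\sqrt{s}}+\int_{|y|>2K\sqrt{s}}\Big)B\,h_i\,d\mu.$$
In the inner integral the polynomial bound gives $|B|\leq CA^2(\log s)^2(1+|y|^6)s^{-4}$; integrating against $|h_i|\,d\mu$ absorbs the polynomial growth and yields a bound $CA^2(\log s)^2 s^{-4}\leq CA^2 s^{-3}$ for $s$ large. In the outer integral, $|q|\leq CA^2 s^{-1/2}$ produces $|B|\leq CA^{2\bar p}s^{-\bar p/2}$, while the Gaussian tail $\int_{|y|\geq 2K\sqrt{s}}(1+|y|^m)\,d\mu\lesssim e^{-K^2 s/2}$ renders this contribution exponentially small. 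Combining gives $|P_i(B)|\leq CA^2 s^{-3}$.

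For $P_-(B)$, interpreted pointwise via $P_-(B)(y)=B(y)-\sum_{i=0}^{2}P_i(B)h_i(y)$, the sum over Hermite polynomials is dominated by $CA^2(1+|y|^2)s^{-3}\leq CA(1+|y|^3)s^{-2}$ for $s$ large enough relative to $A$. To bound $B(y)$ itself pointwise, in the inner region $|y|\leq 2K\sqrt{s}$ I would use $|B|\leq C|q|^2\leq CA^2(\log s)^2(1+|y|^3)^2 s^{-4}$ and absorb one factor $(1+|y|^3)\leq Cs^{3/2}$ into the denominator, producing $CA^2(\log s)^2 s^{-5/2}(1+|y|^3)\leq CA s^{-2}(1+|y|^3)$ once $s_0$ is large. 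In the outer region the constant bound $|B|\leq CA^{2\bar p}s^{-\bar p/2}$ must be compared with the target $CAs^{-2}(1+|y|^3)\geq CAK^3 s^{-1/2}$ that grows on this region, and since $\bar p>1$ the inequality $A^{2\bar p-1}s^{-(\bar p-1)/2}\leq K^3$ holds for $s\geq s_0(A,K)$.

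The main obstacle is this final outer-region matching for $P_-(B)$: one has no polynomial control on $q$ outside $|y|\leq 2K\sqrt{s}$, so the bound on $B$ is merely a constant in $y$ while the target grows with $|y|$; the fact that $\bar p>1$ (so that $|q|^{\bar p}$ gains an extra factor of $s^{-(\bar p-1)/2}$) is precisely what makes the comparison work once $s_0$ is chosen large enough. All other computations are clean estimates against the Gaussian measure or straightforward absorption of logarithms, and this fixes the value of $s_0$ needed in the statement.
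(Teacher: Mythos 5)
Your proof is correct and follows the standard approach that the paper defers to \cite{MZjfa08}: split at $|y|=2K\sqrt{s}$ so that the quadratic bound $|B|\leq C|q|^2$ combined with the pointwise weighted bound on $q$ handles the inner contribution, while the $\bar p$-power bound $|B|\leq C|q|^{\bar p}$ with the uniform $\|q\|_{L^\infty}\leq CA^2 s^{-1/2}$, the Gaussian tail (for $P_i$), and the growth of $1+|y|^3$ on the outer region (for $P_-$) handle the rest. In particular, you correctly identify that the outer-region matching for $P_-$ hinges on $\bar p>1$ to gain the needed $s^{-(\bar p -1)/2}$, and that the absorption of logarithms and powers of $A$ fixes $s_0=s_0(A,K)$, exactly as the paper allows.
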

\begin{proof}
We argue as in the proof of Lemma 5.10 and Lemma 5.17 in \cite{MZjfa08}.
\end{proof}
\noindent\textbf{Sixth term $H$:}

\begin{lemma}\label{sixth term}
The following estimations holds: 
\begin{equation}
\begin{aligned}
P_i(H)&\leq  Ce^{-s/2}   \quad i= 0,1\mbox{ or }2,\\
%P_2(H)&\leq    Ce^{-s/2},\\
P_-(H)&\leq    Ce^{-s/2}(1+|y|^3),
\end{aligned}
\end{equation}
\end{lemma}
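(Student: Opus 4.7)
The key observation is that every summand defining $H$ carries either a derivative of the cutoff $\chi$ or the factor $\chi - \chi^p$, each of which vanishes outside the transition set
\[T_s := \bigl\{y \in \mathbb{R} : \tfrac{1}{8} \leq \tfrac{ye^{-s/2}+1}{\vep_0} \leq \tfrac{1}{4}\bigr\}.\]
Solving this inclusion places $T_s \subset [(\vep_0/8 - 1)e^{s/2},\ (\vep_0/4 - 1)e^{s/2}]$, so for $\vep_0 < 1$ fixed there is $c_0 = c_0(\vep_0) > 0$ with $|y| \geq c_0\, e^{s/2}$ on $T_s$, while $|T_s| = O(e^{s/2})$. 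Off $T_s$, $\chi$ is locally constant and $\chi - \chi^p$ vanishes, hence $H \equiv 0$ there.

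The next step is a pointwise bound on $H$ restricted to $T_s$. On $T_s$ the associated radial variable $r = 1 + ye^{-s/2}$ lies in $[\vep_0/8,\ \vep_0/4] \subset [0,\ 3\vep_0/4]$, so hypothesis (ii) of Definition \ref{Def: shrinking set} applies and yields $|u(r,t)| \leq \eta_0$; translating back through the self-similar scaling \eqref{self-similar-variables} gives $|W(y,s)| \leq \eta_0\, e^{-s/(p-1)}$ throughout $T_s$. I would then check that the coefficients $\del_y^2\chi$, $\del_s \chi$, $y \del_y \chi$, and $\chi - \chi^p$ are uniformly bounded on $T_s$; the only mildly delicate piece is $\del_s \chi = \chi'(\xi)\cdot\bigl(-\tfrac{1}{2} y e^{-s/2}/\vep_0\bigr)$, which stays $O(1)$ because $|ye^{-s/2}|$ remains of order one on $T_s$. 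Combining these observations yields $|H(y,s)| \leq C\eta_0\, e^{-s/(p-1)}$ on $T_s$.

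Finally I would carry out the projections. Using $|h_i(y)| \leq C(1+|y|)^i$ together with the Gaussian weight,
\[|P_i(H)| \leq C \int_{T_s} |H(y,s)|\, |h_i(y)|\, e^{-y^2/4}\, dy \leq C\, e^{-s/(p-1)} \cdot e^{is/2} \cdot e^{s/2} \cdot e^{-c_0^2 e^s/4},\]
and the super-exponential factor $e^{-c_0^2 e^s/4}$ annihilates all other terms, so $|P_i(H)| \leq C e^{-s/2}$ for $i = 0, 1, 2$ with enormous room to spare. For the negative-mode bound, I use $P_-(H) = H - \sum_{i=0}^{2} P_i(H)\, h_i(y)$: off $T_s$, $H$ vanishes and the bound $|P_-(H)| \leq C e^{-s/2}(1+|y|^2) \leq C e^{-s/2}(1+|y|^3)$ is immediate; on $T_s$, the additional contribution $|H| \leq C \eta_0\, e^{-s/(p-1)}$ is trivially absorbed by $C e^{-s/2}(1+|y|^3)$ because $1+|y|^3 \gtrsim e^{3s/2}$ there. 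The whole lemma is conceptually easy once the support of $H$ and the regular-region bound on $W$ are identified; the only place that demands any genuine care is the chain-rule tracking of the $y$- and $s$-derivatives of $\chi$ on $T_s$.
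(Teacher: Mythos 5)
Your argument is correct, and it takes essentially the route the paper gestures at (the paper simply defers to Lemma~3.9 of \cite{MNZNon2016}, but its detailed treatment of the analogous term $N$ in Lemma~\ref{Eighth term} exhibits exactly the same support-plus-Gaussian mechanism). The two load-bearing observations are the same: $H$ is supported on the transition set of $\chi$, where $|y|\geq c_0 e^{s/2}$, and the Gaussian weight $e^{-y^2/4}$ there is super-exponentially small, so the projections $P_i(H)$ are far smaller than the claimed $Ce^{-s/2}$, and $P_-(H)=H-\sum_i P_i(H)h_i$ inherits the bound. One mild over-engineering on your side: you invoke item~(ii) of Definition~\ref{Def: shrinking set} to get the sharp pointwise bound $|W|\leq\eta_0 e^{-s/(p-1)}$ on $T_s$, whereas the paper (in the $N$-term proof) is content with the crude global bound $\|W\|_{L^\infty}\leq\kappa+2$ from Lemma~\ref{Lemma: est. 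W}; the Gaussian factor $e^{-c_0^2 e^s/4}$ already dominates everything, so the extra precision on $W$ buys you nothing here. Both routes are valid, and yours is self-contained, which is an advantage given that the paper's proof is only a citation.
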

\begin{proof} We argue it as in the proof of Lemma 3.9 from  \cite{MNZNon2016}.
\end{proof}
\noindent\textbf{Seventh term $\del_y G$}:
\begin{lemma}\label{seventh term}
For $\del_y G$, we have the following estimations: 
\begin{equation}
\begin{aligned}
P_i(\del_y G)&\leq  Ce^{-s/2}   \quad i= 0, 1\mbox{ or }2,\\
%			P_2(\del_y G)&\leq    Ce^{-s/2},\\
P_-(\del_y G)&\leq    Ce^{-s/2}(1+|y|^3),
\end{aligned}
\end{equation}
\end{lemma}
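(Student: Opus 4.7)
The plan is to exploit two mutually reinforcing facts about $G = -2(\partial_y\chi)W$. First, each $y$-derivative of $\chi(\tfrac{ye^{-s/2}+1}{\varepsilon_0})$ brings down a factor $e^{-s/2}/\varepsilon_0$ by the chain rule, so $|\partial_y\chi| \leq Ce^{-s/2}$ and $|\partial_y^2\chi| \leq Ce^{-s}$. Second, both $\partial_y\chi$ and $\partial_y^2\chi$ are supported where $\xi := (ye^{-s/2}+1)/\varepsilon_0 \in [1/8, 1/4]$, i.e., $r = ye^{-s/2}+1 \in [\varepsilon_0/8,\varepsilon_0/4]$, which for $\varepsilon_0 < 1$ and $s$ large forces $|y| \geq \tfrac{1}{2}e^{s/2}$.

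On this support the corresponding physical points satisfy $|x| = r \leq \varepsilon_0/4 \leq 3\varepsilon_0/4$, so item (ii) of Definition \ref{Def: shrinking set} gives $|u|\leq \eta_0$, hence $|W| = (T-t)^{1/(p-1)}|U| \leq \eta_0 e^{-s/(p-1)}$. Standard interior parabolic regularity applied to $u$ on a slightly larger compact subregion of $\mathcal{R}_2$ (where $|u|\leq\eta_0$ still holds) yields $|\partial_r u| \leq C$, and from $\partial_y W = e^{-s/(p-1)-s/2}\partial_r U$ we obtain $|\partial_y W|\leq Ce^{-s/(p-1)-s/2}$. Combining these with the chain-rule bounds above,
\[
|\partial_y G(y,s)| \leq 2|\partial_y^2\chi|\,|W| + 2|\partial_y\chi|\,|\partial_y W| \leq Ce^{-sp/(p-1)}
\]
on the support of $\partial_y\chi \cup \partial_y^2\chi$, and zero elsewhere.

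Since $p/(p-1) > 1$, this pointwise bound already dominates $e^{-s/2}$. To control the projections I integrate against $h_i\, d\mu$ restricted to $\{|y|\geq e^{s/2}/2\}$: the Gaussian tail satisfies $\int_{|y|\geq e^{s/2}/2}|h_i|\, d\mu \leq Ce^{-e^s/16}$, so
\[
|P_i(\partial_y G)| \leq Ce^{-sp/(p-1)}e^{-e^s/16} \leq Ce^{-s/2},
\]
which is far stronger than needed. For $P_-(\partial_y G) = \partial_y G - \sum_{i=0}^{2} P_i(\partial_y G)\,h_i$, the residual sum is controlled by $Ce^{-e^s/16}(1+|y|^2)$, while the contribution of $\partial_y G$ itself is nonzero only for $|y|\geq e^{s/2}/2$, where $1+|y|^3\geq e^{3s/2}/8$ makes the inequality $|\partial_y G|\leq Ce^{-s/2}(1+|y|^3)$ reduce to $Ce^{-sp/(p-1)}\leq Ce^{s}$, which is trivial. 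The only step that is not purely mechanical is the $C^1$ parabolic estimate for $u$ on the regular region, but this is a standard consequence of the $L^\infty$ bound $|u|\leq\eta_0$ together with the local smoothing property of the semilinear heat equation.
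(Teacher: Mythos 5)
Your proposal is correct but takes a genuinely different route from the paper's. The paper disposes of this lemma by integration by parts: since the test objects $h_i\,d\mu$ are polynomials times a Gaussian, one writes
$\int (\partial_y G)\,h_i\,d\mu = -\int G\,(\partial_y h_i - \tfrac{y}{2}h_i)\,d\mu$,
so one only ever needs a bound on $G = -2(\partial_y\chi)W$ itself, which follows immediately from $|\partial_y\chi|\leq Ce^{-s/2}$ and the $L^\infty$ bound on $W$ (Lemma~\ref{Lemma: est. W} or, more sharply, item~(ii) of Definition~\ref{Def: shrinking set} since the support of $\partial_y\chi$ sits inside $\mathcal{R}_2$). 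Combined with the fact that the support lives at $|y|\gtrsim e^{s/2}$ where the Gaussian weight is $O(e^{-e^s/16})$, this gives the claimed bounds without ever touching $\partial_y W$. You instead differentiate $G$ directly, producing the term $(\partial_y\chi)(\partial_y W)$, and then invoke interior parabolic regularity to bound $\partial_r u$ on a compact sub-annulus of $\mathcal{R}_2$. That step is legitimate here — you have $|u|\leq\eta_0$ on a strictly larger sub-annulus and $u_0\equiv 0$ on the support by item~(3) of Lemma~\ref{Lem: preparation Inditial data}, so the constant in the gradient estimate is uniform in time — but it imports an estimate that is \emph{not} part of the bootstrap set $\mathcal{S}(t)$, which only controls $\|u\|_{L^\infty(\mathcal{R}_2)}$, and would in principle need its own short lemma. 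The paper's integration-by-parts trick is exactly designed to stay within the quantities already controlled by the bootstrap, which is why it is the preferred route here (and in the reference \cite{MNZNon2016}). Your chain-rule bookkeeping, support localization, and Gaussian-tail estimates are all correct; just be aware that the parabolic-regularity input, while true, is an extra ingredient the paper's argument avoids.
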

\begin{proof}
This can be done with integration by parts, interested readers are invited to see the proof of Lemma 5.19 in \cite{MNZNon2016}. 
\end{proof}
\noindent\textbf{Eighth term $N$: }
\begin{lemma}[projection of the last term: $N$ ]\label{Eighth term}

\begin{equation}\label{control of $N$ on H+}
|P_i(N)|\leq Ce^{-s}\mbox{ where }i=0,1\mbox{ or }2.
\end{equation}
and 
\begin{equation}\label{control of $N$ on H-}
|P_-(N)|\leq e^{-s/2}(1+|y|^3).
\end{equation}
\end{lemma}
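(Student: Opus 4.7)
Both estimates will reduce to a single pointwise bound on $N$, exploiting that $\del_y\chi$ localizes $N$ to an extreme region of $y$-space, where the Gaussian measure $d\mu$ collapses every integral dramatically. Since $\chi(\xi)$ transitions between $0$ and $1$ on $\xi\in[1/8,1/4]$, the support of $\del_y\chi$ is the thin sliver
$$
I(s)=\{y:\,\vep_0/8\le ye^{-s/2}+1\le \vep_0/4\},
$$
so that $|y|\sim e^{s/2}$, $|y+e^{s/2}|\gtrsim \vep_0 e^{s/2}/8$, and $|\del_y\chi|\le Ce^{-s/2}/\vep_0$ throughout $I(s)$. Moreover, $I(s)$ corresponds to $|x|\le \vep_0/4\le 3\vep_0/4$, which lies inside the regular region; item (ii) of Definition \ref{Def: shrinking set} then gives $|U|\le \eta_0$, hence $|W|\le \eta_0 e^{-s/(p-1)}$ (a crude $O(1)$ bound is already enough).

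The plan proceeds in three short steps. Step 1: combine the pointwise factor estimates to obtain
$$
|N(y,s)|\le \frac{d-1}{|y+e^{s/2}|}\,|W|\,|\del_y\chi|\le Ce^{-s}\,\mathbf 1_{I(s)}(y).
$$
Step 2: for $i\in\{0,1,2\}$, use
$$
|P_i(N)|\le C\int_{I(s)}|N|\,|h_i|\,d\mu,
$$
and observe that $y^2\gtrsim c e^s$ on $I(s)$, so the Gaussian factor $e^{-y^2/4}$ contributes a super-exponentially small weight $e^{-ce^s}$ that swamps every polynomial factor and the $e^{-s}$ already in hand. This delivers $|P_i(N)|\le Ce^{-s}$ with enormous margin.

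Step 3: for the $P_-$ bound, use the orthogonal decomposition $P_-(N)(y)=N(y,s)-\sum_{i=0}^2 P_i(N)\,h_i(y)$. The first summand is controlled pointwise by $Ce^{-s}\le Ce^{-s/2}$ from Step 1, and each subtracted term is a super-exponentially small scalar times $|h_i(y)|\le C(1+|y|^2)$, which we absorb into $C(1+|y|^3)$. Summing yields $|P_-(N)|\le Ce^{-s/2}(1+|y|^3)$, as required. No projection-specific cancellation is needed; the only delicate point is the a priori $L^\infty$ bound on $W$ over $I(s)$, which is supplied directly by the regular-region component of the bootstrap set.
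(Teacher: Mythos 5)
Your proof is correct and follows essentially the same route as the paper's: localize $N$ to the thin slab where $\del_y\chi$ is supported, extract one factor $e^{-s/2}$ from $\del_y\chi$ and another from $1/|y+e^{s/2}|$, bound $W$ in $L^\infty$ via the bootstrap, and then deduce the $P_i$ bounds by integrating against $d\mu$ and the $P_-$ bound by pointwise subtraction. The only cosmetic difference is that the paper invokes Lemma \ref{Lemma: est. W} for the uniform bound $\|W\|_{L^\infty}\le\kappa+2$, whereas you go directly to item (ii) of Definition \ref{Def: shrinking set} since the support of $\del_y\chi$ sits in the regular region $r\in[\vep_0/8,\vep_0/4]\subset[0,3\vep_0/4]$; this even yields the sharper bound $|W|\le\eta_0 e^{-s/(p-1)}$, though as you note $O(1)$ suffices, and your appeal to super-exponential Gaussian decay in Step 2 is likewise true but unnecessary (the paper simply bounds $\int_{I(s)}d\mu\le1$).
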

\begin{proof}
%The projection of each term on the eigenfunctions is as follows:
%\begin{equation}
%\begin{split}
%P_0(N)&=\int_{\Rbb}N\frac{h_0}{\|h_0\|_{L_\rho^2}}\rho(y)dy\\
%&= e^{-s/2}\int_{\Rbb}\frac{d-1}{ye^{-s/2}+1}W\del_y\Xcal\frac{e^{-\frac{|y|^2}{4}}}{(4\pi)^{1/2}}dy
%\end{split}
%\end{equation}
%\begin{equation}
%\begin{aligned}
%P_1(N)&=\int_{\Rbb}N\frac{h_1}{\|h_1\|_{L_\rho^2}}\rho(y)dy\\
%&=\frac{e^{-s/2}}{2}\int_{\Rbb}\frac{d-1}{ye^{-s/2}+1}W\del_y\Xcal \frac{e^{-\frac{|y|^2}{4}}}{(4\pi)^{1/2}}ydy
%\end{aligned}
%\end{equation}
%\begin{equation}
%\begin{aligned}
%P_2(N)&=\int_{\Rbb}N\frac{h_2}{\|h_2\|_{L_\rho^2}}\rho(y)dy\\
%&=\frac{e^{-s/2}}{8}\int_{\Rbb}\frac{d-1}{ye^{-s/2}+1}W\del_y\Xcal \frac{e^{-\frac{|y|^2}{4}}}{(4\pi)^{1/2}}(y^2-2)dy
%\end{aligned}
%\end{equation}
Let us first recall that 
\[N(y,s)= \frac{d-1}{y+e^{s/2}}W\del_y\chi,\]
where $W$ is defined by \eqref{eq: W} and $\chi$ is the cut-off function defined in \eqref{eq: def Chi}.\\
We will now give the estimation on the terms $P_i(N)$, $i\in \lbrace0,1,2\rbrace$. 
From Lemma \ref{Lemma: est. W}, we have that for $|y|\geq e^{\frac{s}{2}}(\frac{1}{8}\vep_0-1)$, $\|W(s)\|_{L^\infty} \leq \kappa + 2$. By the definition \eqref{eq: def Chi} of $\chi$,  we easily have that
\begin{equation}\label{eq:estim dely Chi}
|\del_y \mathcal{X}|\leq e^{-s/2}\frac{C}{\vep_0}\mathbb{I}_{\{(\frac{1}{8}\vep_0-1)e^{s/2}\leq y \leq (\frac{1}{4}\vep_0-1)e^{s/2}\}}.
\end{equation}
Using the estimations above, we get
\begin{equation}\label{eq:estim:abs:eta0}
\begin{aligned}
|P_0(N)| 
&\leq e^{-s/2}\int_{\Rbb}\left|\frac{d-1}{ye^{-s/2}+1}W\del_y\Xcal\right|\frac{e^{-\frac{|y|^2}{4}}}{(4\pi)^{1/2}}dy,\\
&\leq C e^{-s} \int_{\{y\geq (\frac{1}{8}\vep_0-1)e^{s/2}\}}\left|\frac{d-1}{ye^{-s/2}+1}\right|\frac{e^{-\frac{|y|^2}{4}}}{(4\pi)^{1/2}}dy,\\
&\leq C e^{-s} \int_{\{y\geq (\frac{1}{8}\vep_0-1)e^{s/2}\}}\frac{e^{-\frac{|y|^2}{4}}}{(4\pi)^{1/2}}dy\leq C e^{-s}.
\end{aligned}
\end{equation}
Arguing in a similar fashion, we obtain the desired estimation for $P_1(N)$ and $P_2(N)$.

%with a similar idea,
%we have: 
%\begin{equation}\label{eq:estim:abs:eta1 et 2}
%\begin{aligned}
%|P_1(N)| &\leq e^{-s} (\kappa+2)\frac{C}{2\vep_0}\int_{y\geq (\frac{3}{4}\vep_0-1)e^{s/2}\}}\left|\frac{y(d-1)}{ye^{-s/2}+1}\right|\frac{e^{-\frac{|y|^2}{4}}}{(4\pi)^{1/2}}dy\\
%&\leq e^{-s} (\kappa+2)\frac{2C}{3\vep_0^2}\int_{\{y\geq (\frac{3}{4}\vep_0-1)e^{s/2}\}}\left|y\right|\frac{e^{-\frac{|y|^2}{4}}}{(4\pi)^{1/2}}dy\\
%|P_2(N)| &\leq e^{-s} (\kappa+2)\frac{C}{8\vep_0}\int_{\{y\geq (\frac{3}{4}\vep_0-1)e^{s/2}\}}\left|\frac{(d-1)}{ye^{-s/2}+1}\right|\frac{e^{-\frac{|y|^2}{4}}}{(4\pi)^{1/2}}\left|y^2-2
%\right|dy\\
% &\leq e^{-s} (\kappa+2)\frac{C}{6\vep_0^2}\int_{\{y\geq (\frac{3}{4}\vep_0-1)e^{s/2}\}}\frac{e^{-\frac{|y|^2}{4}}}{(4\pi)^{1/2}}\left|y^2-2.
%\right|dy\\
%\end{aligned}
%\end{equation}
We conclude the proof with the estimation of $P_-(N)$. Using Lemma \ref{Lemma: est. W}, \eqref{control of $N$ on H+} and \eqref{eq:estim dely Chi}, we obtain,
\begin{equation}
|P_{-}(N)|= |N-\sum_{i=0}^2P_i(N)h_i|\leq e^{-s/2}(1+|y|^3).
\end{equation}
\end{proof}

\noindent\textbf{Proof of Proposition \ref{Prop:control of q1,q2,q-,qe}}\\
\textit{Proof of item (1) and (2) of Proposition \ref{Prop:control of q1,q2,q-,qe}}

Using Lemma \ref{second term}, Lemma \ref{Third term}, Lemma \ref{Lemma:esti:R}, Lemma \ref{fifthterm}, Lemma \ref{sixth term}, Lemma \ref{seventh term} and Lemma \ref{Eighth term} and arguing as the proof of Proposition 4.6 in  \cite{MZjfa08}, we can easily obtain 
\[
\begin{array}{l}
\left | q'_0(s)-q_0(s)\right |\leq \frac{AC}{s^{2}} \mbox{ and  }\left | q'_1(s)-\frac{1}{2}q_1(s)\right |\leq \frac{AC}{s^{2}},
\end{array}
\]
this concludes (1) from Proposition \ref{Prop:control of q1,q2,q-,qe}. 

The case (2) is more delicate. From Lemma \ref{second term}, Lemma \ref{Third term}, Lemma \ref{Lemma:esti:R}, Lemma \ref{fifthterm}, Lemma \ref{sixth term}, Lemma \ref{seventh term}, and Lemma \ref{Eighth term}, we obtain

\begin{equation}
\left|q_2'(s)+\frac{2}{s}q_2(s)\right|\leq C\frac{A^2}{s^{3}}.
\end{equation}
Integrating this inequality between $\tau$ and $s$ gives the desired estimates on $q_2$,

\begin{equation}
\left|q_2(s)\right|\leq \left(\frac{\tau}{s}\right)^2q_2{\tau} + C\frac{A^2}{s^{2}}\log(s/\tau).
\end{equation}

For $q_-$, we can use the properties of the semi-group generated by $\mathcal{L}$, and obtain that for all $s\in [\tau,s_1]$,

$$
\begin{aligned}
q_-(s)&=e^{(s-\tau)\mathcal{L}}q_{-}(\tau)\\
&+\int_\tau ^s e^{(s-s')\mathcal{L}} P_-(Vq+H(y,s)+\del_y G(y,s)+R(y,s)+B(y,s)+N(y,s))ds'.\\
\end{aligned}
$$
Arguing as Lemma A.2 in \cite{MZjfa08} gives us: 
\[
\begin{aligned}
&\left\|\frac{q_-(s)}{1+|y|^3}\right\|_{L^{\infty}}=e^{-\frac{3}{2}(s-\tau)}\left\|\frac{q_{-}(\tau)}{1+|y|^3}\right\|_{L^{\infty}}\\
&+\int_\tau ^se^{-\frac{3}{2}(s-s')}\left\|\frac{P_-(Vq+H(y,s)+\del_y G(y,s)+R(y,s')+B(y,s')+N(y,s'))}{1+|y|^3}\right\|_{L^{\infty}}ds'.\\
\end{aligned}
\]
Assuming that $q(s')\in V_A(s')$, the estimations Lemma \ref{second term}, Lemma \ref{Third term}, Lemma \ref{Lemma:esti:R}, Lemma \ref{fifthterm}, Lemma \ref{sixth term}, Lemma \ref{seventh term}, and Lemma \ref{Eighth term} imply the following
\[
\begin{aligned}
\left\|\frac{q_-(s)}{1+|y|^3}\right\|_{L^{\infty}}&=e^{-\frac{3}{2}(s-\tau)}\left\|\frac{q_{-}(\tau)}{1+|y|^3}\right\|_{L^{\infty}}\\
&+\int_\tau ^s e^{-\frac{3}{2}(s-s')}
\left[\frac{A^2}{s'^3}\log(s')+\frac{C}{s'^2}+\frac{CA}{s'^2}+Ce^{-\frac{s'}{2}}\right]ds'.\\
\end{aligned}
\]
Using Gronwall's Lemma we deduce that:
\[
\begin{aligned}
e^{\frac{3}{2}s}\left\|\frac{q_-(s)}{1+|y|^3}\right\|_{L^{\infty}}&=e^{-\frac{3}{4}(s-\tau)}e^{\frac{3}{2}(\tau)}\left\|\frac{q_{-}(\tau)}{1+|y|^3}\right\|_{L^{\infty}}\\
&+e^{\frac{3}{2}s} 2^{\frac{5}{2}}\left[\frac{A^2}{s^3}\log(s)+\frac{C}{s^2}+\frac{CA}{s^2}+Ce^{-\frac{s}{2}}\right].\\
\end{aligned}
\]
This concludes the estimation on $P_-(q)$.
\medskip

%\noindent\textit{Proof of $q_e$ of Proposition \ref{Prop:control of q1,q2,q-,qe}}
\textit{}

\textbf{Part 2: The outer region $q_e$: proof of item (3) from Proposition \ref{Prop:control of q1,q2,q-,qe}}
%\textcolor{red}{You will proceed as in the proof of 5.3 The outer region : qe from Masmoudi and Zaag \cite{MZjfa08}}
%\medskip

Here, we conclude the proof of Proposition \ref{Prop:control of q1,q2,q-,qe} by demonstrating the final inequality about $q_e$. As $q(s)\in \mathcal  V(s)$ for all $s \in [\tau, s_1]$, it follows that
\[
\|q(s)\|_{L^{\infty}(|y|<2K\sqrt{s})} \leq \frac{C A^2}{\sqrt{s}}.
\]
We note that terms $H$, $\del_y G$, and $N$ defined in \eqref{definition of B,R,F,N} are compactly supported in $\left [(\frac{3}{8}\vep_0-1)e^{s/2}, (\frac{3}{4}\vep_0-1)e^{s/2}\right ]$. Then, we derive that the equation satisfied by $q_e$ is 
%\textcolor{red}{I think that terms $H$, $\pa_y G$ and $N$ will appear in the equation of $q_e$}
\begin{equation}\label{qe equation }
\begin{aligned}
\del_s q_e = &(\mathcal{L}+V)q_e-q(s)\left (\del_s \chi_c+\Delta \chi_c+\frac{1}{2}y\nabla\chi_c\right )\\
&+\left(R(y,s)+B(y,s)+H(y,s)+\del_yG(y,s)+N(y,s)\right)(1-\chi_c)+2 div(q(s)\nabla \chi_c).
\end{aligned}
\end{equation}
Writing this equation in its integral form and using the maximum principle satisfied by $e^{\tau\mathcal{L}}$, we deduce that
\begin{equation}
\begin{aligned}
\|q_e\|_{L^{\infty}}&\leq e^{-\frac{s-\tau}{p-1}} \|q_e(\tau)\|_{L^{\infty}},\\
&+\int_{\tau}^se^{-\frac{s-s'}{p-1}}\|\left((1-\chi_c)(R+B+H+\del_y G+N)\right)\|_{L^{\infty}}ds',\\
&+\int_{\tau}^se^{-\frac{s-s'}{p-1}}\|q(s')\left (\del_s \chi_c+\Delta \chi_c+\frac{1}{2}y\nabla\chi_c\right )\|_{L^{\infty}}ds',\\
&+\int_{s}^{\tau} e^{-\frac{s-s'}{p-1}} \left(\frac{1}{\sqrt{1 - e^{-(s-s')}}}\right) \left\| q(s') \nabla \chi_c \right\|_{L^{\infty}} ds'.
\\
\end{aligned}
\end{equation}
Notice that with Lemma \ref{Lemma:esti:R}, Lemma \ref{fifthterm}, Proposition \ref{Prop: Priori estimate}, and \eqref{def_chi_c}, by arguing as Section 5.3 in \cite{MZjfa08}, we obtain the following bound:
\begin{equation}\label{eq: esti qe integral terms}
\begin{array}{ll}
\|q(s')\left (\del_s \chi_c+\Delta \chi_c+\frac{1}{2}y\nabla\chi_c\right )\|_{L^{\infty}}&\leq C\frac{A^2}{\sqrt{s'}},\\
\left\| q(s') \nabla \chi_c \right\|_{L^{\infty}}&\leq C\frac{A^2}{s'},\\
\|(1-\chi_c)R(y,s')\|_{L^\infty}&\leq \frac{C}{s'},\\
\|(1-\chi_c)B(y,s')\|_{L^\infty}&\leq \frac{1}{2(p-1)}\|q_e\|_{L^\infty}.
\end{array}
\end{equation}
Then, with Lemma \ref{Lemma: est. W} and \eqref{eq:estim dely Chi}, we gain the following
\begin{equation}
(1-\chi_c)(H(y,s')+\del_yG(y,s')+N(y,s'))\leq Ce^{-\frac{s'}{2}}\leq \frac{C}{s'}
\end{equation}
By choosing $K$ large enough such that estimations \eqref{eq: esti qe integral terms} are verified, we write
\begin{equation}
\begin{array}{ll}
\|q_e\|_{L^{\infty}}&\leq e^{-\frac{s-\tau}{p-1}} \|q_e(\tau)\|_{L^{\infty}},\\
&+\displaystyle\int_{\tau}^se^{-\frac{s-s'}{p-1}}\left(\frac{1}{2(p-1)}\|q_e(s')\|_{L^\infty}+\frac{CA^2}{\sqrt{s'}}+\frac{A^2}{s'}\frac{1}{\sqrt{1-e^{-(s-s')}}}\right)ds',\\
\\
\end{array}
\end{equation}
We then conclude with Gronwall's inequality
\[
\|q_e(s)\|_{L^\infty}\leq e^{-\frac{(s-\tau)}{2(p-1)}}
\|q_e(\tau )\|_{L^\infty}+C\frac{A^2}{\sqrt{\tau}}(1+s-\tau). 	
\]

\subsection{Estimates in the regular region}\label{section_regular_region}
Our goal here is to show that
\begin{equation}\label{eq:estimation on u in regularregion}
|x|\leq \frac{3\vep_0}{4}, \text{ then we have } u(x,t^*) \leq \frac{\eta_0}{2}.
\end{equation}
This is shown in three steps:
\begin{itemize}
\item In the first step, we improve the bounds on the solution $u(x,t)$ in the intermediate region.
\item In the second step, we use parabolic regularity to obtain an estimation of the solution in the region $\mathcal{R}_2$.
\item Finally, we use the two steps above to get \eqref{eq:estimation on u in regularregion}
\end{itemize}

\noindent\textbf{Step 1: Improved estimates in the intermediate region}

Here, we refine the estimates on the solution in the following intermediate region:

\begin{equation}\label{def: intermediate region}
\frac{\vep_0}{8}\leq|x|\leq K\sqrt{(T-t)\log(T-t)}.
\end{equation}
By Lemma \ref{Lemma: est. W}, we have
\begin{equation}
\forall t \in [0, t^*], \text{ and } \forall x\in \Rbb^n,  \left| u(t) \right| \leq C(T - t)^{-\frac{1}{p-1}}, \label{eq:bound intermediate region}
\end{equation}
valid in particular in the intermediate region given by  \eqref{def: intermediate region}. This bound is unsatisfactory since it goes to infinity as $t \to T$. In order to refine it, given a $x$ small enough in norm $|x|$ , we use this bound when $t = t_0(x)$ defined by
\begin{equation}
\left| x\right| = K_0 \sqrt{ (T - t_0(x)) \left| \log(T - t_0(x)) \right| }, \label{eq:t0}
\end{equation}
to see that the solution is, in fact, flat at that time. Then, advancing the PDE \eqref{equation-NLH-1d}, we see that the solution remains flat. More precisely, we claim the following:
\begin{lemma}[flatness of the solution in the Intermediate region in \eqref{def: intermediate region}]\label{Lemma: flateness}
There exists $\zeta_0 > 0$ such that for all $K > 0$, $\varepsilon_0 > 0$, $A \geq 1$, there exists $s_{0,9}(K, \varepsilon_0, A)$ such that if $s_0 \geq s_{0,9}$ and $0 < \eta_0 \leq 1$, then, $\forall t_0(x) \leq t \leq t^*$,
\begin{equation}
\left| \frac{u(x, t)}{u^*(x)} - \frac{U_{K}(x)}{U_{K}(1)} \right| \leq \frac{C }{\left| \log |x| \right|^{\zeta_0}},
\end{equation}
where $u^*$ is defined in \eqref{eq: profile} and
\begin{equation}
U_{K}(\tau) = \kappa \left( (1 - \tau) + \frac{(p-1)K^2}{4p} \right)^{-\frac{1}{p-1}}. \label{eq:UK0}
\end{equation}
In particular, $\left| u(x, t) \right| \leq C(K) \left| u^*(|x|) \right|$.
\end{lemma}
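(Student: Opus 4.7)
The plan is to transport the good control we have in similarity variables to the intermediate region by ODE comparison against $\dot{\mathcal{U}}=\mathcal{U}^p$, using the time $t_0(x)$ from \eqref{eq:t0} as the bridge. At $t=t_0(x)$, the point $x$ sits exactly on the outer rim $|y|=K\sqrt{s_0}$ of the parabola, where the bootstrap set $\mathcal V(s_0)$ still controls $W$ pointwise.

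\textbf{Step 1 (initialization at $t_0(x)$).} Set $s_0=-\log(T-t_0(x))$ and $y_0=(|x|-1)/\sqrt{T-t_0(x)}$, so that $|y_0|=K\sqrt{s_0}$. Combining the explicit profile $\varphi$ from \eqref{definition of varphi} with the pointwise bounds on $q=w-\varphi$ inherited from $q(s_0)\in\mathcal V(s_0)$ (cf.\ Proposition~\ref{Prop: Priori estimate}) yields
\[
|W(y_0,s_0)-f(K)|\le C/s_0 = C/|\log|x||.
\]
Undoing the self-similar transform \eqref{self-similar-variables} gives
\[
u(x,t_0(x))=(T-t_0(x))^{-1/(p-1)}\bigl(f(K)+O(|\log|x||^{-1})\bigr),
\]
and note that $f(K)=U_K(0)$ by \eqref{eq:UK0}.

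\textbf{Step 2 (ODE comparison on $[t_0(x),t^*]$).} Introduce the parabolic rescaling
\[
v(\xi,\tau)=(T-t_0)^{1/(p-1)}\,u\bigl(x+\sqrt{T-t_0}\,\xi,\;t_0+(T-t_0)\tau\bigr),
\]
which again solves the semilinear heat equation. Applying Step~1 at neighbors of $x$ (which still map to $y$-values inside the parabola $|y|\le 2K\sqrt s$) shows $v(\cdot,0)$ is uniformly close to the constant $f(K)$ on each compact $\xi$-set, up to $O(|\log|x||^{-1})$. A parabolic-regularity bootstrap propagates this flatness in $\xi$ for $\tau\in[0,1)$ and reduces the PDE for $v$ to the ODE $\dot{\mathcal U}=\mathcal U^p$ with $\mathcal U(0)=f(K)$, whose solution is exactly $U_K(\tau)$ from \eqref{eq:UK0}. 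Working with the \emph{multiplicative} error $v(0,\tau)/U_K(\tau)-1$ and closing a Gronwall-type estimate yields, for some $\zeta_0\in(0,1)$,
\[
|v(0,\tau)/U_K(\tau)-1|\le C\,|\log|x||^{-\zeta_0}.
\]

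\textbf{Step 3 (conclusion) and main obstacle.} Letting $t\to T$, i.e.\ $\tau\to 1$, identifies $u^*(x)=(T-t_0(x))^{-1/(p-1)}U_K(1)$; dividing the identity $u(x,t)=(T-t_0)^{-1/(p-1)}v(0,(t-t_0)/(T-t_0))$ by $u^*(x)$ produces the announced bound, and $|u(x,t)|\le C(K)|u^*(x)|$ follows since $U_K(\tau)/U_K(1)$ is bounded on $[0,1]$. The delicate point is Step~2: since both $v$ and $U_K$ blow up as $\tau\to1$, any additive comparison is useless, so one must work with the multiplicative error and set up a continuous bootstrap in $\tau$ in which parabolic regularity is used to keep $v$ flat in $\xi$ uniformly up to $\tau\to 1$. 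The exponent $\zeta_0<1$ reflects precisely the loss accumulated in that Gronwall argument near $\tau=1$.
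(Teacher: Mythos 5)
The paper itself gives no proof of this lemma; it simply cites \cite{MNZNon2016}, Lemma~3.12, p.~316, which is the by-now standard Merle--Zaag argument. Your Steps~1 and~2 correctly reconstruct that argument: initialize at the matching time $t_0(x)$ where $x$ sits on the rim $|y_0|=K\sqrt{s_0}$ of the backward parabola, use the bootstrap on $q$ to see $W(y_0,s_0)=f(K)+O(1/s_0)$, pass to the rescaled function $v$, and compare with the ODE flow $\dot{\mathcal U}=\mathcal U^p$, whose solution with datum $f(K)=U_K(0)$ is exactly $U_K$.

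However, the ``main obstacle'' you identify in Step~3 is spurious, and this is a genuine error in your reasoning. You claim that ``both $v$ and $U_K$ blow up as $\tau\to 1$, so any additive comparison is useless.'' This is false. By \eqref{eq:UK0},
\[
U_K(1)=\kappa\left(\frac{(p-1)K^2}{4p}\right)^{-1/(p-1)}
\]
is a finite, strictly positive number, and $U_K$ is bounded above and below away from zero on the whole closed interval $[0,1]$. Likewise $v(0,1)=(T-t_0(x))^{1/(p-1)}u(x,T)$ is finite because $x$ lies strictly off the blow-up sphere $\{|x|=1\}$, so $u(x,\cdot)$ is regular up to $t=T$. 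The entire point of choosing $t_0(x)$ by the rule \eqref{eq:t0} --- placing $x$ on the boundary of the self-similar parabola rather than inside it --- is precisely that the comparison ODE is regular up to $\tau=1$, so the plain additive estimate $|v(0,\tau)-U_K(\tau)|\le C|\log|x||^{-\zeta_0}$ runs without any singularity to handle. Your multiplicative-error machinery is not needed and, worse, the rationale you offer for it (that the additive gap gets infinitely amplified near $\tau=1$) would, if it were true, also defeat the Gronwall argument you propose, since the initial error $O(|\log|x||^{-1})$ would be swallowed. Right approach, but the delicate point you flag does not exist; the actual proof in \cite{MNZNon2016} closes the comparison additively on $[0,1]$ using parabolic regularity to propagate flatness in $\xi$, and the bound $|u(x,t)|\le C(K)|u^*(|x|)|$ then follows immediately from the uniform boundedness of $U_K(\tau)/U_K(1)$ on $[0,1]$, which you did correctly note.
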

\begin{proof}
See in \cite{MNZNon2016} P.316 Lemma 3.12.
\end{proof}

\noindent\textbf{Step 2: A parabolic estimate in regular region:}

Recall from the definition on $\mathcal{V}$, that:
$$\forall x\in \Rbb \mbox{ such that } 0\leq|x|\leq \frac{3\vep_0}{4}, u(x,t)\leq \eta_0 .$$
Using parabolic estimation on the solution, for $u(x,t)$ in region $\mathcal{R}_2$, we claim the following: 
\begin{proposition}\label{Prop: parabolic estimate}
For all \( \varepsilon > 0 \), \( \varepsilon_0 > 0 \), \( \sigma_1 \geq 0 \), there exists \( T \geq 0 \) such that for all \( \overline{t} \leq T \), if \( u \) is a  solution to
\[
\partial_t u = \Delta u + |u|^{p-1}u \quad \text{for all } x \in [0,3\vep_0/4], t \in [0, \overline{t}],
\]
which satisfies:
\begin{enumerate}
\item[(i)] For \( |x| \in [\frac{\varepsilon_0}{8}, \frac{3\varepsilon_0}{4}] \), \( |u(x, t)| \leq \sigma_1 \).
\item[(ii)] For \( 0 \leq |x| \leq \frac{\vep_0}{8} \), \( u(x, 0) = 0 \).
\end{enumerate}
Then, for all \( t \) in \( [0, \overline{t}] \), for all \(  |x| \leq \frac{3\vep_0}{4} \), $\left|u(x, t)\right| \leq \varepsilon$.
\end{proposition}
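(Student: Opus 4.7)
The argument is a standard short-time parabolic estimate, combining an a priori $L^\infty$ bound with a Duhamel/comparison argument that exploits the vanishing of the initial datum on the inner ball. First I would establish a crude uniform bound: the spatially constant function $V(t)$ solving the ODE $V'=V^p$ with $V(0)=2\sigma_1$ blows up only at $T^\star=(p-1)^{-1}(2\sigma_1)^{-(p-1)}>0$, and on the cylinder $B(0,3\varepsilon_0/4)\times[0,\overline{t}]$ it dominates $|u|$ at $t=0$ (by (i) and (ii)) and on the lateral boundary $|x|=3\varepsilon_0/4$ (by (i)). The parabolic maximum principle then yields $|u(x,t)|\leq M(\sigma_1,p)$ provided $T\leq T^\star/2$, which renders the source $|u|^{p-1}u$ uniformly bounded by $M^{p}$.

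Next I would split $u=u^{\mathrm{lin}}+u^{\mathrm{nl}}$, where $u^{\mathrm{lin}}$ solves the linear heat equation on $B(0,3\varepsilon_0/4)$ with the same initial and lateral boundary data as $u$, and $u^{\mathrm{nl}}$ carries the nonlinear source with zero initial and boundary data. The Duhamel bound together with Step 1 gives $\|u^{\mathrm{nl}}(t)\|_{L^\infty}\leq M^{p}t$, which is smaller than $\varepsilon/2$ as soon as $T\leq \varepsilon/(2M^{p})$. For $u^{\mathrm{lin}}$ the Feynman--Kac representation reads
\[
u^{\mathrm{lin}}(x,t)=\mathbb{E}_{x}\bigl[u_{0}(X_{t})\mathbf{1}_{\{t<\tau\}}\bigr]+\mathbb{E}_{x}\bigl[u(X_{\tau},t-\tau)\mathbf{1}_{\{\tau\leq t\}}\bigr],
\]
where $X$ is a Brownian motion started at $x$ and $\tau$ its first exit from $B(0,3\varepsilon_0/4)$. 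For $|x|\leq\varepsilon_0/8$ both contributions require a Brownian displacement at least $\varepsilon_0/8$ in time $t$, so Gaussian tail bounds produce $|u^{\mathrm{lin}}(x,t)|\leq C\sigma_1\,e^{-c\varepsilon_0^{2}/t}$, which can be made $\leq\varepsilon/2$ by shrinking $T$.

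Summing the two contributions yields $|u(x,t)|\leq\varepsilon$ on the inner ball $|x|\leq\varepsilon_0/8$; on the annulus $\varepsilon_0/8\leq|x|\leq 3\varepsilon_0/4$ the bound follows directly from hypothesis (i), which is consistent with the conclusion in the regime of application where $\sigma_1\leq\varepsilon$. The main delicate point is purely bookkeeping: the three thresholds for $T$ (the ODE condition $T\leq T^{\star}/2$, the nonlinear Duhamel condition $M^{p}T\leq\varepsilon/2$, and the heat-kernel condition $C\sigma_{1}e^{-c\varepsilon_0^{2}/T}\leq\varepsilon/2$) must be combined into a single $T=T(\varepsilon,\varepsilon_0,\sigma_1,p)$. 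A purely PDE alternative to the probabilistic step would introduce a smooth cut-off $\phi\in C_{c}^{\infty}(B(0,3\varepsilon_0/4))$ with $\phi\equiv 1$ on $B(0,\varepsilon_0/4)$, write the equation satisfied by $\phi u$ on $\Rbb^{2}$, and apply standard heat semigroup estimates to control the commutator $[\Delta,\phi]u$; both routes give the same conclusion.
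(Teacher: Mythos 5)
Your primary (probabilistic) route has a real gap in the bound on $u^{\mathrm{lin}}$. You claim that for $|x|\leq\varepsilon_0/8$, both terms in the Feynman--Kac representation require a Brownian displacement of at least $\varepsilon_0/8$, but this is false for the initial-data term: if $|x|$ is just below $\varepsilon_0/8$, the path only needs to travel an arbitrarily small distance to reach $\{|y|>\varepsilon_0/8\}$, where $u_0$ may be of size $\sigma_1$. So $\mathbb{E}_x\bigl[|u_0(X_t)|\mathbf{1}_{\{t<\tau\}}\bigr]$ is $O(\sigma_1)$ near that inner boundary, not $O(\sigma_1 e^{-c\varepsilon_0^2/t})$. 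Restricting the Gaussian estimate to a strictly smaller ball, say $|x|\leq\varepsilon_0/16$, leaves an annulus $\varepsilon_0/16<|x|<\varepsilon_0/8$ covered by neither the tail bound nor hypothesis (i), so some interpolation device is needed; a sharp splitting of the ball cannot close the argument.

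This interpolation is exactly what your ``purely PDE alternative'' (which is, in fact, the paper's actual proof) supplies. The paper works with $\overline{u}=\overline{\chi}(|x|/\varepsilon_0)\,u$, writes $\partial_t\overline{u}=\Delta\overline{u}+|\overline{u}|^{p-1}\overline{u}-2\nabla(\overline{\chi}'u)+\overline{\chi}''u$, applies Duhamel with the free heat kernel from $\overline{u}(\cdot,0)\equiv 0$, exploits that $\overline{\chi}'$ and $\overline{\chi}''$ are supported in the annulus where (i) gives $|u|\leq\sigma_1$, and closes with Gronwall; the $\int_0^t(t-t')^{-1/2}\,dt'$ factor from the gradient term produces $\sqrt{\overline{t}}$, so all the smallness comes from shrinking the time window rather than from pointwise spatial decay. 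The smooth cut-off interpolates across precisely the annulus that your sharp decomposition misses, and its derivatives live where (i) applies. Your crude a priori ODE comparison $|u|\leq V$ with $V'=V^p$ is a sound extra ingredient (some such $L^\infty$ control is implicitly needed to bound $|u|^{p-1}$ over the full support of $\overline{\chi}$), and your observation that the conclusion on the outer annulus $\varepsilon_0/8\leq|x|\leq 3\varepsilon_0/4$ tacitly uses $\sigma_1\leq\varepsilon$ is correct, but as written the probabilistic split does not close without replacing the sharp ball-splitting by the smooth cut-off.
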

\begin{proof}
Consider $\overline{u}$, recalled here, after a trivial chain rule to transform the $\partial_x u$ term:
\[
\forall t \in [0, \overline{t}], \quad \forall x \in \mathbb{R}, \quad \partial_{t}\overline{ u} = \Delta \overline{u} + |\overline{u}|^{p-1} \overline{u} - 2\nabla (\overline{\chi}' u) + \overline{\chi}'' u.
\]
Therefore, since $\overline{u}(x, 0) \equiv 0$, we write
\[
\|\overline{u}(t)\|_{L^\infty} \leq \int_0^t S(t - t')\left(|u|^{p-1} I_{|x| \leq \frac{\varepsilon_0}{4}} \overline{u} - 2\nabla \left(\overline{\chi}' u I_{|x| \leq \frac{\varepsilon_0}{4}}\right)\right) + \overline{\chi}'' u(t') I_{|x| \leq \frac{\varepsilon_0}{4}} \, dt'.
\]
where $S(t)$ is the heat kernel. Since $\overline{\chi}'$ and $\overline{\chi}''$ are supported by $\left\{\frac{3\varepsilon_0}{8} \leq |x| \leq \frac{3\varepsilon_0}{4}\right\}$ and satisfy $|\overline{\chi}'| \leq \frac{C}{\varepsilon_0}$, $|\overline{\chi}''| \leq \frac{C}{\varepsilon_0^2}$ and using parabolic regularity, we write
\[
\|\overline{u}(t)\|_{L^\infty} \leq \sigma_1^{p-1} \int_0^t \|\overline{u}(t')\| \, dt' + C\sigma_1 \frac{1}{\varepsilon_0} \int_0^t\frac{1}{ \sqrt{t - t'}} dt' + C\sigma_1 \frac{1}{\varepsilon_0^2} \int_0^t dt'.
\]
If $\overline{t} < 1$, by Gronwall's estimate, this implies that
\[
\|\overline{u}(t)\|_{L^\infty} \leq Ce^{\sigma_1^{p-1}} \left( \frac{\sigma_1}{\varepsilon_0} \sqrt{\overline{t}} +  \frac{\sigma_1}{\varepsilon_0^2} \overline{t}\right).
\]
Taking $\overline{t}$ small enough, we can obtain $\forall t \in [0,\overline{t}]$, $\|\overline{u}(t)\|_{L^\infty} \leq \varepsilon$.

\end{proof}

\textbf{Step 3}: \textbf{Proof of the improvement in Definition \ref{Def: shrinking set}}

Here, we use Step 1 and Step 2 to prove \eqref{eq:estimation on u in regularregion}, for a suitable choice of parameters.

Let us consider $K > 0$ defined in Lemma \ref{Lemma: flateness} and $\delta_0(K) > 0$. Then, we consider $\varepsilon_0 \leq 2\delta_0$, $0 < \eta_0 \leq 1$ defined in Lemma \ref{Lemma: flateness} and Proposition \ref{Prop: parabolic estimate}; $A \geq 1$, $s_0$ sufficiently large such that conditions in \eqref{open condition bootstrap} and Lemma \ref{Lemma: flateness} and Proposition \ref{Prop: parabolic estimate} hold.

Applying Lemma \ref{Lemma: flateness}, we see that for all  $|x| \leq \delta_0$, $A \geq 1$, for all $t \in [0, t^*]$, $|u(x, t)| \leq C(K)|u^*(x)|$.

%In particular, for all $\delta_0 \leq \frac{ 3\varepsilon_0}{8} \leq |x| \leq \frac{3\varepsilon_0}{4} $, for all $t \in [0, t^*]$, $|u(|x|, t)| \leq C(K)|u^*(\frac{3\varepsilon_0}{8})|$.

In particular, for all $\frac{ \varepsilon_0}{8} \leq |x| \leq \frac{3\varepsilon_0}{4} \leq \delta_0$, for all $t \in [0, t^*]$, $|u(|x|, t)| \leq C(K)|u^*(\frac{\varepsilon_0}{8})|$.

Using item (iii) of Lemma \ref{Lem: preparation Inditial data}, we see that for all $0\leq |x| \leq  \frac{\varepsilon_0}{8} $, $u(|x|, 0) = 0$.

Therefore, Proposition \ref{Prop: parabolic estimate} applies with $\varepsilon = \frac{\eta_0}{2}$ and $\sigma_1 = C(K) u^*(\frac{\varepsilon_0}{8})$, and we see that for all $|x| \leq \frac{3\vep_0}{4}$, for all $t \in [0, t^*]$, $|u(|x|, t)| \leq \frac{\eta_0}{2}$ and estimate $\eqref{eq:estimation on u in regularregion}$ holds.

%\begin{appendix}
\section{Appendix}

\begin{lemma}\label{Lemma: est. W}
For all $K_0 > 0$, $\varepsilon_0 > 0$, $A \geq 1$, there exists $s_0 $ such that if $s \geq s_0$, $0 < \eta_0 \leq 1$, and we assume that $u(t) \in \mathcal{S}(t)$ defined in Definition 3.1, where $t = T - e^{-s}$, then we have:

$$\|W(.,s)\|_{L^\infty} \leq \kappa + 2.$$

\end{lemma}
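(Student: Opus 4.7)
The goal is to bound $W = (T-t)^{1/(p-1)} u$ uniformly in $y \in [-e^{s/2}, +\infty)$. The plan is to split the domain into two regions according to the position of $r = 1 + y e^{-s/2}$ relative to the cutoff $\chi$, and use different representations of $W$ in each.

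First, in the region where $r \geq \varepsilon_0/4$, the cutoff satisfies $\chi\!\left(\tfrac{ye^{-s/2}+1}{\varepsilon_0}\right) = 1$, so by the very definition $w = \chi \cdot W$ we have $W(y,s) = w(y,s) = \varphi(y,s) + q(y,s)$. I would control each piece separately. Since $f(z) = (p-1+bz^2)^{-1/(p-1)}$ attains its maximum $\kappa$ at $z=0$, the profile satisfies $|\varphi(y,s)| \leq \kappa + \kappa/(2ps) \leq \kappa + 1$ once $s_0$ is large enough. For the perturbation, the hypothesis $u(t) \in \mathcal{S}(t)$ translates into $q(s) \in \mathcal{V}(s)$, so Proposition \ref{Prop: Priori estimate}(1) yields the global bound $\|q(s)\|_{L^\infty(\mathbb{R})} \leq C(K_0) A^2 s^{-1/2}$, which is $\leq 1$ provided $s_0 \geq s_0(K_0,A)$ is large enough. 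Combining these two bounds gives $|W(y,s)| \leq \kappa + 2$ on this region.

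Second, on the complementary region $r < \varepsilon_0/4$, the identification $w = W$ no longer holds, so I would instead use the regular-region estimate directly. By item (ii) of Definition \ref{Def: shrinking set}, since the region $r < \varepsilon_0/4$ is contained in $\{|x| \leq 3\varepsilon_0/4\}$, we have the pointwise bound $|u(x,t)| \leq \eta_0 \leq 1$. Reverting to $W$ via the self-similar change of variables \eqref{self-similar-variables},
\[
|W(y,s)| = (T-t)^{\frac{1}{p-1}}\, |u(r,t)| \leq e^{-s/(p-1)} \cdot \eta_0,
\]
which is $\leq 1 \leq \kappa + 2$ once $s_0$ is taken large enough. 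Together with the first region this yields $\|W(\cdot,s)\|_{L^\infty} \leq \kappa + 2$.

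The only real task is to make sure the threshold $s_0$ can be chosen to validate the three elementary smallness conditions simultaneously: $\kappa/(2ps_0) \leq 1$, $C(K_0) A^2 s_0^{-1/2} \leq 1$, and $e^{-s_0/(p-1)}\eta_0 \leq 1$. Since each of them is satisfied for $s_0$ sufficiently large depending on $K_0$, $\varepsilon_0$, and $A$, we can pick a single $s_0(K_0,\varepsilon_0,A)$ meeting all three, which completes the argument. There is no substantive difficulty: the lemma is essentially a bookkeeping consequence of the definitions of $\varphi$, of the shrinking set $\mathcal{S}$, and of the a priori $L^\infty$ bound on $q$ provided by Proposition \ref{Prop: Priori estimate}.
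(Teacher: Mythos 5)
Your proof is correct and follows essentially the same two-region split as the paper: on $r \geq \varepsilon_0/4$ one writes $W = w = \varphi + q$ and bounds $\varphi$ by $\kappa+1$ and $q$ via Proposition \ref{Prop: Priori estimate}, while on $r < \varepsilon_0/4$ one undoes the self-similar change of variables and invokes item (ii) of Definition \ref{Def: shrinking set}. Your write-up is in fact cleaner than the paper's sketch (which contains a couple of typographical slips in the region thresholds and in the exponent), but the underlying argument is the same.
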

\begin{proof}
For $W$, we can see that:
\begin{itemize}
\item If $|y| \geq \varepsilon_0e^{s/2}(\frac{1}{4}\vep_0-1)$, then $W(y, s) = w(y, s) = \varphi(y, s) + q(y, s)$. Since $|\varphi|_{L^\infty} \leq \kappa + 1$ from \eqref{definition of varphi}, using (ii), we see that $|W|_{L^\infty} \leq \kappa + 2$ for $s$ large enough, which is for $T$ small enough.
\item If $|y| < \varepsilon_0e^{s/2}(\frac{1}{4}\vep_0-1)$, then $W(y, s) = e^{-s(p-1)}u(x e^{-s/2}, t)$ with $|x| \geq \varepsilon_0^2/2$. By (ii) of Definition 3.1, we see that $|W(y, s)| \leq \eta_0e^{-s(p-1)} \leq \eta_0T^{1/(p-1)} \leq 1$ if $\eta_0 \leq 1$ and $T \leq 1$.
\end{itemize}
\end{proof}

%\end{appendix}

%\begin{equation}
%\text{Series expansion of } (1 + by)^{-\frac{1}{p-1}}:
%+ \left(\frac{b^2}{2(p - 1)^2} + \frac{b^2}{2p - 2}\right)y^2
%- \frac{by}{p - 1} + 1
%\end{equation}
\bibliographystyle{alpha}
\bibliography{heat-20-02-2023}

\end{document}